\providecommand{\customgenericname}{}
\newcommand{\newcustomtheorem}[2]{%
  \newenvironment{#1}[1]
  {%
   \renewcommand\customgenericname{#2}%
   \renewcommand\theinnercustomgeneric{##1}%
   \innercustomgeneric
  }
  {\endinnercustomgeneric}
}
\theoremstyle{plain}
\newtheorem{theorem}{Theorem}[section]
\newtheorem{lemma}[theorem]{Lemma}
\newtheorem{proposition}[theorem]{Proposition}
\newtheorem*{corollary*}{Corollary}
\newtheorem*{theorem*}{Theorem}
\newtheorem*{proposition*}{Proposition}
\theoremstyle{definition}
\newtheorem{example}[theorem]{Example}
\theoremstyle{remark}
\DeclareMathAlphabet{\mathbbold}{U}{bbold}{m}{n}
\def\bb1{\mathbbold{1}}
\def\Fp{\mathbb{F}_p}
\def\bbn{\mathbb{N}}
\DeclareMathOperator\ad{ad}
\DeclareMathOperator\loc{Loc}
\def\Fr{Fr}
\newtheorem*{remark*}{Remark}
\DeclareDocumentCommand{\adx}{ O{2} O{x_1}  }{\ad_{#2}^{[#1]}}
\begin{document}
\title{The Restricted Burnside Problem for Moufang Loops}
\author{Alexander Grishkov\protect\endnotemark[1], Liudmila Sabinina\protect\endnotemark[2]$^{,1}$, Efim Zelmanov\protect\endnotemark[3]}
\dedicatory{To the memory of Peter Plaumann.}
	
	

\keywords{Moufang loop, Burnside problem, Malcev algebra}

\subjclass[2000]{Primary 20N05, Secondary 17D10}	
	
\address{1. To whom correspondence should be addressed\hfill\break
E-mail: liudmila@uaem.mx\hfill\break}	
	
\begin{abstract}
We prove that for positive integers $m \geq 1, n \geq 1$ and a prime number $p \neq 2,3$ there are finitely many finite $m$-generated Moufang loops of exponent $p^n$.
\end{abstract}

\maketitle

\section{Introduction}\label{introduction}

A loop $U$ is called a \underline{Moufang loop} if it satisfies the following identities:
\[
((zx)y)x = z ((xy)x) \; \textrm{ and } \; x(y(xz)) = (x(yx))z.
\]
In this paper we solve the Restricted Burnside Problem for Moufang loops of exponent $p^n$, $p>3$.

\begin{customthm}{1}\label{thm:1}
For an arbitrary prime power $p^n$, $p>3$, there exists a function $f(m)$ such that any finite $m$-generated Moufang loop of exponent $p^n$ has order $< f(m)$.
\end{customthm}

For groups this assertion was proved by E. Zelmanov (\cite{20}, \cite{21}).  For Moufang loops of prime exponent it was proved by A. Grishkov \cite{6} (if $p \neq 3$) and G. Nagy \cite{15} (if $p = 3$).  In \cite{16}, \cite{17} the Restricted Burnside Problem was solved for a subclass of Moufang loops and related Bruck loops.


\section{Groups with triality}\label{Section2}

A group $G$ with automorphisms $\rho$ and $\sigma$ is called a \underline{group with triality} if $\rho^3 = \sigma^2 = (\rho \sigma)^2 = 1$ and 
\[
[x, \sigma] [x, \sigma]^\rho [x, \sigma]^{\rho^2} = 1
\]
for every $x \in G$, where $[x,\sigma] = x^{-1} x^\sigma$.

Let $G$ be a group with triality.  Let $U = \{ [x,\sigma] | x \in G\}$.  Then the subset $U$ endowed with the multiplication
\[
a \cdot b = (a^{-1})^\rho b (a^{-1})^{\rho^2}; \; a,b \in U
\]
becomes a Moufang loop.

Every Moufang loop $U$ can be obtained in this way from a suitable group with triality, which is finite if $U$ is finite.  Moreover, if $p$ is a prime number, then a finite Moufang $p$-loop can be obtained from a finite $p$-group with triality (\cite{3}, \cite{5}, \cite{10}).


\section{Lie and Malcev algebras}\label{Section3}

Let $\Fp$ be a field of order $p$, let $G$ be a group.  Consider the group algebra $\Fp G$ and its fundamental ideal $\omega$, spanned by all elements $1-g, \, g \in G$.  The Zassenhaus filtration is the descending chain of subgroups
\[
G = G_1 > G_2 > \dotsb
\]
where $G_i = \{g \in G | 1-g \in \omega^i\}.$  Then $[G_i, G_j] \subseteq G_{i+j}$ and each factor $G_i / G_{i+1}$ is an elementary abelian $p$-group.  Hence,
\begin{align*}
L = L_p(G) = \sum\limits_{i \geq 1} L_i, \;\;L_i = G_i / G_{i+1}
\end{align*}
is a vector space over $\Fp$.  The bracket
\[
[x_i G_{i+1}, y_j G_{j+1}] = [x_i, y_j] G_{i+j+1}; \; \; x_i \in G_i, y_j \in G_j,
\]
makes $L$ a Lie algebra.  Notice that the bracket $[\; , \; ]$ on the left hand side of the last equality is a Lie bracket whereas $[\; , \; ]$ on the right hand side denotes the group commutator.

Let $x,y$ be generators of a free associative algebra over $\Fp$.  Then $(x+y)^p = x^p + y^p + \{x,y\}$, where $\{x,y\}$ is a Lie element.  Following \cite{12}, we call a Lie $\Fp-$algebra $L$ with an operation $a \to a^{[p]}$, $a \in L$, a Lie $p$-algebra if 
\begin{align*}
(ka)^{[p]} &= k^p a^{[p]},\\
(a+b)^{[p]} &= a^{[p]} + b^{[p]} + \{a,b\},\\
[a^{[p]}, b] &= [\underbrace{a,[a, \dotsc [a}_{p},b] \dotsc]
\end{align*}
for arbitrary $k \in \Fp; a,b \in L$.  The mapping $L_i \to L_{ip}, \linebreak (g_i G_{i+1})^{[p]} = g_i^p G_{ip + 1}$, extends to the operation $a \to a^{[p]}, a \in L$, making $L$ a Lie $p$-algebra.  For more details about this construction see \cite{2}, \cite{11}, \cite{22}.

We call a Lie algebra (resp. Lie $p$-algebra) $L$ with automorphisms $\rho, \sigma$ a \underline{Lie algebra with triality} if $\rho^3 = \sigma^2 = (\rho\sigma)^2 = 1$ and for an arbitrary element $x \in L$ we have
\[
(x^\sigma - x) + (x^\sigma - x)^\rho + (x^\sigma - x)^{\rho^2} = 0.
\]

\begin{lemma}
Let $G$ be a group with triality and let $p$ be a prime number.  Then $L_p(G)$ is a Lie $p$-algebra with triality.
\end{lemma}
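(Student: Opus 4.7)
The plan is to show that the automorphisms $\rho$ and $\sigma$ of $G$ descend to automorphisms of $L = L_p(G)$ satisfying the same triple of relations, and then to deduce the triality identity in $L$ graded-piece by graded-piece from the triality identity in $G$.

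First, I would observe that the Zassenhaus filtration $(G_i)$ is characteristic: the augmentation ideal $\omega$ and its powers $\omega^i$ are preserved by any automorphism of $G$ (acting on $\Fp G$), so each $G_i = \{g \in G : 1-g \in \omega^i\}$ is stable under $\rho$ and $\sigma$. Consequently $\rho$ and $\sigma$ induce $\Fp$-linear maps on each $L_i = G_i/G_{i+1}$, and hence on $L$. These induced maps respect the graded Lie bracket (since group automorphisms respect the group commutator on the $G_i$'s) and the $p$-operation (since $(g^p)^{\rho} = (g^\rho)^p$), so they act on $L$ as automorphisms of a Lie $p$-algebra. The relations $\rho^3 = \sigma^2 = (\rho\sigma)^2 = 1$ descend from $G$ to $L$ automatically.

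Next, I would verify the triality identity. The map $x \mapsto (x^\sigma - x) + (x^\sigma - x)^\rho + (x^\sigma - x)^{\rho^2}$ is $\Fp$-linear on $L$, so it suffices to check that it vanishes on each graded piece $L_i$. Pick a homogeneous element $\bar{x} = xG_{i+1}$ with $x \in G_i$. Because $G_i/G_{i+1}$ is elementary abelian of exponent $p$, inversion in $G_i$ becomes negation in $L_i$, and the commutator $[x,\sigma] = x^{-1}x^\sigma \in G_i$ projects to $\bar{x}^\sigma - \bar{x}$ in $L_i$. Applying $\rho$ and $\rho^2$ and using that both preserve $G_i$, the conjugates $[x,\sigma]^\rho$ and $[x,\sigma]^{\rho^2}$ project to $(\bar{x}^\sigma - \bar{x})^\rho$ and $(\bar{x}^\sigma - \bar{x})^{\rho^2}$ respectively. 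Multiplying in $G_i$ and passing to $L_i$ converts the group identity $[x,\sigma][x,\sigma]^\rho[x,\sigma]^{\rho^2} = 1$ into precisely the desired additive identity in $L_i$.

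The only step requiring care is the bookkeeping that identifies $[x,\sigma] \bmod G_{i+1}$ with the Lie-theoretic difference $\bar{x}^\sigma - \bar{x}$; this rests squarely on the fact that $G_i/G_{i+1}$ is an abelian $p$-group, so that the non-abelian product $x^{-1}x^\sigma$ in $G_i$ becomes the sum $-\bar{x} + \bar{x}^\sigma$ in $L_i$. I do not anticipate any further obstacle: the argument is essentially a clean transport of structure from $G$ to the associated graded $L_p(G)$.
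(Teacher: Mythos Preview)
Your proposal is correct and follows essentially the same approach as the paper: descend $\rho,\sigma$ to automorphisms of $L_p(G)$ via the characteristicity of the Zassenhaus filtration, then read off the additive triality identity on each $L_i$ from the multiplicative identity $[x_i,\sigma][x_i,\sigma]^\rho[x_i,\sigma]^{\rho^2}=1$ in $G_i$ using that $G_i/G_{i+1}$ is abelian. You have simply spelled out more of the routine bookkeeping than the paper does.
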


\begin{proof}
The automorphisms $\rho, \sigma$ of the group $G$ give rise to automorphisms $\rho, \sigma$ of the Lie algebra $L_p(G)$.  For an element $x_i \in G_i$ we have
\[
[x_i, \sigma][x_i,\sigma]^\rho [x_i, \sigma]^{\rho^2} = 1.
\]
It implies that for the element $x = x_i G_{i+1} \in L_i$ we have
\[
(x^\sigma - x) + (x^\sigma - x)^\rho + (x^\sigma - x)^{\rho^2} = 0.
\]
This completes the proof of the lemma.
\end{proof}

Recall that a (nonassociative) algebra is called a \underline{Malcev algebra} if it satisfies the identities
\begin{enumerate}[(1)]
\item \label{Enum1} $xy = -yx$
\item \label{Enum2} $(xy)(xz) = ((xy)z) x + ((yz)x)x + ((zx)x)y$,
\end{enumerate}
see \cite {4}, \cite{14}, \cite{23}.

\begin{lemma}[see \cite{7}]\label{lem:3.2}
Let $L$ be a Lie algebra with triality over a field of characteristic $\neq 2,3$.  Let $H = \{x \in L | x^\sigma = -x\}$.  Then $H$ is a Malcev algebra with multiplication
\[
a \ast b = [a + 2a^\rho, b] = [a^\alpha, b],
\]
where $a,b \in H$, $\alpha = 1 + 2\rho$.
\end{lemma}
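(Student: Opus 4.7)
The plan is to decompose $L$ via the $\sigma$-action and verify closure, anti-commutativity, and the Malcev identity by reducing each to a consequence of the triality identity. Since $\cha \neq 2$, write $L = L^+ \oplus L^-$ where $L^\pm = \{x \in L \mid x^\sigma = \pm x\}$, so $H = L^-$. Every element of $L^-$ has the form $x^\sigma - x$ (take $x = -a/2$), so the triality identity is equivalent to
\[
a + a^\rho + a^{\rho^2} = 0 \quad \text{for every } a \in L^-.
\]
Using the relation $\sigma \rho \sigma = \rho^{-1}$ coming from $(\rho\sigma)^2 = 1$, a direct calculation shows that $(a^\alpha)^\sigma = a^\alpha$ for $a \in L^-$, i.e.\ $a^\alpha \in L^+$. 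Since $[L^+, L^-] \subseteq L^-$, closure $a \ast b = [a^\alpha, b] \in H$ follows.

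Anti-commutativity amounts to the identity $[a^\rho, b] = [a, b^\rho]$ for $a, b \in L^-$. My strategy is to set $P := [a^\rho, b] - [a, b^\rho]$ and force $P = 0$ by symmetry alone. Using $a^{\rho^2} = -a - a^\rho$ and the analogous relation for $b$, short computations give $P^\sigma = -P$ (so $P \in L^-$) and $P^\rho = P$. Applying the identity $P + P^\rho + P^{\rho^2} = 0$, valid for any element of $L^-$, one obtains $3P = 0$, whence $P = 0$ since $\cha \neq 3$. This is the first place where the assumption $\cha \neq 3$ intervenes.

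The Malcev identity is the main obstacle. The plan is analogous but substantially more elaborate: expand each side of
\[
(a \ast b) \ast (a \ast c) = ((a \ast b) \ast c) \ast a + ((b \ast c) \ast a) \ast a + ((c \ast a) \ast a) \ast b
\]
into nested Lie brackets of the form $[\,[a^\alpha, b]^\alpha, [a^\alpha, c]\,]$ and similar, apply the Jacobi identity in $L$ to move outer $\alpha$'s inside, and reduce every occurrence of $\rho^2$ on an element of $L^-$ via $a + a^\rho + a^{\rho^2} = 0$. The difference of the two sides should land in $L^-$ and be $\rho$-invariant, at which point the triality identity forces it to vanish again in characteristic $\neq 3$. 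The technical difficulty is the sheer size of this expansion: tracking many nested terms with mixed $\rho$- and $\sigma$-conjugates while repeatedly applying Jacobi and the $S_3$-relations is the delicate point, and this calculation is carried out in the cited reference \cite{7}.
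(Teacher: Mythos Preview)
Your proposal is correct and aligns with the paper's treatment: the paper itself does not prove Lemma~3.2 but simply cites \cite{7}, and you do the same for the Malcev identity while supplying the easy parts (closure and anti-commutativity). Your argument that $P=[a^\rho,b]-[a,b^\rho]$ satisfies $P^\sigma=-P$ and $P^\rho=P$, hence $3P=0$, is precisely the computation the paper carries out a few lines later in the proof of Lemma~3.3 (see equation~\eqref{eq:xrhoy}), so there is no methodological difference.
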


\begin{lemma}\label{lem:3.3}
For arbitrary elements $a,b,c \in H$ we have
\[
3 [[ a,b ], c] = 2(a \ast b) \ast c + (c \ast b) \ast a + (a \ast c) \ast b.
\]
\end{lemma}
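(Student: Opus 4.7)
The plan is to verify the identity by direct expansion in the Lie algebra $L$, using two simplifications derived from the triality structure. First, for any $a \in H$, applying the triality identity to $x = -a/2$ yields
\[
a + a^\rho + a^{\rho^2} = 0, \quad\text{equivalently,}\quad a^{\rho^2} = -a - a^\rho.
\]
Second, I would establish an auxiliary \emph{swap} identity: $[a^\rho, b] = [a, b^\rho]$ for all $a, b \in H$. Since $a \ast b \in H$, we have $(1+\rho+\rho^2)(a \ast b) = 0$; expanding this using $a \ast b = [a, b] + 2[a^\rho, b]$ and the relation $c^{\rho^2} = -c - c^\rho$ for $c \in \{a, b\}$, the $[a, b]$ and $[a^\rho, b^\rho]$ contributions cancel, leaving $3\bigl([a^\rho, b] - [a, b^\rho]\bigr) = 0$. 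Since the characteristic is not $3$, the swap identity follows.

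With these tools, I would expand each Malcev triple product. A short computation using the swap identity yields
\[
(a \ast b)^\alpha = [a, b] - 2[a^\rho, b] - 2[a^\rho, b^\rho],
\]
and hence
\[
(a \ast b) \ast c = [[a, b], c] - 2[[a^\rho, b], c] - 2[[a^\rho, b^\rho], c],
\]
with analogous formulas for $(c \ast b) \ast a$ and $(a \ast c) \ast b$. Substituting into the right-hand side of the claimed identity, the three ``unprimed'' triple brackets $[[a, b], c]$, $[[c, b], a]$, $[[a, c], b]$ combine via the Jacobi identity applied to $(a, b, c)$ to give exactly $3[[a, b], c]$, matching the left-hand side.

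It then remains to show that the correction terms---the singly $\rho$-shifted brackets such as $[[a^\rho, b], c]$ and the doubly shifted ones such as $[[a^\rho, b^\rho], c]$---sum to zero. Jacobi applied to triples like $(a^\rho, b, c)$ produces relations among the singly shifted brackets, while Jacobi applied to $(a^\rho, b^\rho, c)$ and its cyclic variants, combined with the swap identity on the inner bracket (e.g.\ $[b^\rho, c] = [b, c^\rho]$), reduces the doubly shifted brackets to combinations of singly shifted ones. A careful summation then confirms that the full collection of corrections cancels.

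The main obstacle is precisely this last cancellation: the singly and doubly $\rho$-shifted Lie brackets lie in neither $H$ nor $L^+$ in general, so their individual behavior under $\sigma$ and $\rho$ is not simple. The cancellation only becomes visible after systematically rewriting every term in a common basis via repeated applications of the Jacobi identity and the swap identity, and careful bookkeeping is needed to enumerate the many triple brackets that arise and to verify their vanishing.
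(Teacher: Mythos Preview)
Your setup coincides with the paper's: the triality sum $a+a^\rho+a^{\rho^2}=0$ on $H$, the swap identity $[a^\rho,b]=[a,b^\rho]$, and your expansion $(a\ast b)\ast c=[[a,b],c]-2[[a^\rho,b],c]-2[[a^\rho,b^\rho],c]$ is exactly the paper's formula \eqref{eq:xyz} rewritten via $a^{\rho^2}=-a-a^\rho$. Your observation that the unshifted triple brackets combine by Jacobi to $3[[a,b],c]$ is also correct.

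The gap is in the final cancellation. You claim the $\rho$-shifted terms vanish after ``repeated applications of the Jacobi identity and the swap identity,'' but these two tools alone do not close the computation. After using swap to kill what can be killed (e.g.\ $[[a^{\rho^2},b^\rho],c]+[[a^{\rho^2},c^\rho],b]=0$), one is still left with a residue equivalent to $[[a^{\rho^2},b^\rho],c]+[[c^{\rho^2},b^\rho],a]$, and no further Jacobi/swap move reduces this. What is missing is a \emph{second} triality argument, of the same flavor as your proof of swap: the paper sets $t=[[x^{\rho^2},y^\rho],z]-[[z^{\rho^2},x^\rho],y]$, checks by a Jacobi/swap computation that $t^\rho=t$, checks (using $\sigma$) that $t\in H$, and concludes $3t=(1+\rho+\rho^2)t=0$, hence $t=0$. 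This cyclic relation is the missing ingredient; with it, the paper packages everything cleanly via the Malcev Jacobian $J=(a\ast b)\ast c+(b\ast c)\ast a+(c\ast a)\ast b=6[[a^{\rho^2},b^\rho],c]$ and finishes with the one-line identity
\[
2(a\ast b)\ast c+(c\ast b)\ast a+(a\ast c)\ast b \;=\; 3(a\ast b)\ast c - J \;=\; 3[[a,b],c].
\]
So your plan becomes a valid proof once you insert a second $\rho$-invariance-plus-triality step (for $t$, not just for $a\ast b$); without it the promised bookkeeping cannot terminate.
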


We remark that in a Lie algebra with triality over a field $F$, for arbitrary elements $a_1, \dotsc, a_n \in H$ the subspace $\sum\limits_{i=1}^n F a_i + \sum\limits_{i=1}^n F a_i^\alpha = \sum\limits_{i=1}^n F a_i + \sum\limits_{i=1}^n F a_i^\rho$ is invariant with respect to the group of automorphisms $\langle \sigma, \rho \rangle$.

\begin{proof}
Let's prove that for any $x,y,z \in H$:
\begin{equation}\label{eq:xyz}
(x \ast y) \ast z = 2[[x^{\rho^2}, y^\rho], z] + [[x,y],z].
\end{equation}
Using $x+ x^\rho + x^{\rho^2} = 0$ and $y + y^\rho + y^{\rho^2} = 0$, we get
\begin{align*}
v &= [x^\rho,y] - [x,y^\rho] = -[x^{\rho^2}, y] - [x,y] + [x,y^{\rho^2}] + [x,y] = [x,y^{\rho^2}] - [x^{\rho^2}, y];\\
v^\rho &= [x^{\rho^2}, y ^\rho] - [x^\rho, y^{\rho^2}] = - [x^\rho, y^\rho] - [x, y^\rho] + [x^\rho, y^\rho] + [x^\rho, y] = [x^\rho,y] - [x,y^\rho] = v.
\end{align*}
Then
\[
v^\sigma = [x^{\rho \sigma}, y^\sigma] - [x^\sigma, y^{\rho\sigma}] = [x^{\rho^2}, y] - [x, y^{\rho^2}] = -v,
\]
hence $v \in H$ and, by triality, we have $v + v^\rho + v^{\rho^2} = 3v = 0$.  Since the characteristic of the field is not 3 then $v = 0$ and we proved that
\begin{equation}\label{eq:xrhoy}
[x^\rho, y] = [x,y^\rho], \;\; [x^{\rho^2}, y^\rho] = [x^\rho, y^{\rho^2}].
\end{equation}
Finally, we have by \eqref{eq:xrhoy}
\begin{align*}
(x \ast y) \ast z &= [x + 2x^\rho, y] \ast z\\
 &= [[x + 2x^\rho, y], z + 2z^\rho]\\
&= [[x,y],z] + 2[[x^\rho,y],z] + 2[[x+ 2x^\rho,y],z^\rho]\\
&= [[x,y],z] + 2[[x^\rho,y],z] + 2[[x^\rho + 2x^{\rho^2}, y^\rho],z]\\
&= [[x,y],z] + 2[[x^\rho,y],z] + 2[[-x + x^{\rho^2}, y^\rho],z]\\
&= 2[[x^{\rho^2}, y^\rho],z] + [[x,y],z].
\end{align*}
Let $J = J(x,y,z) = (x \ast y) \ast z + (y \ast z) \ast x + (z \ast x) \ast y,$ then by \eqref{eq:xyz} we get
\[
J = 2([[x^{\rho^2}, y^\rho],z] + [[y^{\rho^2}, z^\rho],x] + [[z^{\rho^2}, x^\rho],y]).
\]
But $t = [[x^{\rho^2}, y^\rho],z] - [[z^{\rho^2}, x^\rho],y] = 0$, indeed, we have
\begin{align*}
t - t^\rho &= ([[x^{\rho^2}, y^\rho],z] - [[z^{\rho^2}, x^\rho],y])^\rho - [[x^{\rho^2}, y^\rho],z] + [[z^{\rho^2}, x^\rho], y]\\
&=  [[x,y^{\rho^2}], z^\rho] - [[z,x^{\rho^2}], y^\rho] - [[x^{\rho^2}, y^\rho],z] + [[z^{\rho^2}, x^\rho], y]\\
&=  [[x^{\rho^2}, y], z^\rho] - [[z,x^{\rho^2}], y^\rho] - [[x^{\rho^2}, y^\rho],z] + [[z^{\rho^2}, x^\rho], y]\\
&= [[x^{\rho^2}, z^\rho],y] + [x^{\rho^2},[y,z^\rho]] - [[z,y^\rho],x^{\rho^2}] - [z,[x^{\rho^2}, y^\rho]] \\
& \;\;\;\;- [[x^{\rho^2}, y^\rho],z] + [[z^{\rho^2}, x^\rho], y]\\
&= [x^{\rho^2}, [y,z^\rho]] - [[z,y^{\rho}], x^{\rho^2}]\\
&=0.
\end{align*}
Hence, $t \in H \cap \{v | v^\rho = v\}$.  As above we can prove that $v = 0$, since the characteristic of the field is $\neq$ 3.

Then
\begin{equation}\label{eq:Jxyz}
J(x,y,z) = (x \ast y) \ast z + (y \ast z) \ast x + (z \ast x) \ast y = 6[[x^{\rho^2}, y^\rho],z].
\end{equation}
Now we are ready to prove the Lemma.  By \eqref{eq:xyz} and \eqref{eq:Jxyz} we get
\begin{align*}
& \;2(x \ast y) \ast z + (z \ast y) \ast x + (x \ast z) \ast y \\
= &\; 3(x \ast y) \ast z + (z \ast y) \ast x + (x \ast z) \ast y + (y \ast x) \ast z\\
= & \; 3(x \ast y) \ast z + J(x,y,z)\\
= & \; 6[[x^{\rho^2}, y^\rho], z] + 3[[x,y],z] - 6[[x^{\rho^2}, y^\rho],z]\\
= & \; 3[[x,y],z],
\end{align*}
which proves the lemma.
\end{proof}

\begin{lemma}\label{lem:3.4}
If a Lie algebra $L$ with triality is generated by elements $a_1, \dotsc, a_m, a_1^\alpha, \dotsc, a_m^\alpha$, where $a_1, \dotsc, a_m \in H$, then the Malcev algebra $H$ is generated by $a_1, \dotsc, a_m$.
\end{lemma}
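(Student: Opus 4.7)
The plan is to produce a Lie subalgebra $V \subseteq L$ containing every $a_i$ and every $a_i^\alpha$ such that $V \cap H = M$, where $M$ denotes the Malcev subalgebra of $H$ generated by $a_1,\dotsc,a_m$. Once such a $V$ is found, the generation hypothesis forces $V = L$, and then $M = V \cap H = L \cap H = H$ is the desired conclusion.

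Since the characteristic is not $2$, I have the eigenspace decomposition $L = H \oplus H'$ with $H' = \{x \in L : x^\sigma = x\}$. For the ``$H'$--part'' of $V$ I take the $\ad$--stabilizer of $M$ inside $H'$,
\[
\mathfrak{d} = \{x \in H' : [x, M] \subseteq M\},
\]
and I set $V = M + \mathfrak{d}$. Because $\ad[x,y] = [\ad x, \ad y]$, the subspace $\mathfrak{d}$ is automatically a Lie subalgebra of $H'$, so the only real work is to check that $\mathfrak{d}$ is large enough to absorb $[M,M]$ and to contain $M^\alpha$.

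The first inclusion is immediate: for $a,b \in M$ one has $b^\alpha \in H'$ and $[b^\alpha, a] = b \ast a \in M$, so $M^\alpha \subseteq \mathfrak{d}$; in particular every $a_i^\alpha$ lies in $V$. The second, $[M,M] \subseteq \mathfrak{d}$, is the step I expect to be the main obstacle and is exactly where I intend to invoke \Cref{lem:3.3}: for $a,b,c \in M$ that lemma rewrites $3[[a,b],c]$ as a Malcev polynomial in $a,b,c$, and since the characteristic is not $3$ one concludes $[[a,b],c] \in M$. Thus each $[a,b]$ acts on $M$ by an $\ad$--operator whose image is in $M$, i.e.\ $[a,b] \in \mathfrak{d}$.

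With these two facts in hand, the containments $[M,M] \subseteq \mathfrak{d}$, $[M,\mathfrak{d}] \subseteq M$, and $[\mathfrak{d},\mathfrak{d}] \subseteq \mathfrak{d}$ follow from, respectively, $[M,M] \subseteq \mathfrak{d}$, the definition of $\mathfrak{d}$ (with antisymmetry), and $\mathfrak{d}$ being a Lie subalgebra of $H'$; so $V$ is Lie--closed. It contains $\{a_i, a_i^\alpha\}$, hence $V = L$ by hypothesis. Since $M \subseteq H$ and $\mathfrak{d} \subseteq H'$ intersect in $0$, the sum $V = M + \mathfrak{d}$ is direct and $V \cap H = M$; combining with $V = L$ gives $H = M$. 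The whole argument beyond \Cref{lem:3.3} is formal manipulation of $\ad$--stabilizers.
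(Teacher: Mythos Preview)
Your proof is correct, and the approach is genuinely different from the paper's. The paper argues by induction on the length $r$ of a left-normed Lie commutator $b=[\dotsc[b_1,b_2],\dotsc,b_r]$ in the generators $a_i,a_i^\alpha$, with a case split on whether $b_r$ and $b_{r-1}$ lie in $\{a_i\}$ or in $\{a_i^\alpha\}$; the case $b_{r-1},b_r\in\{a_i\}$ is where \Cref{lem:3.3} enters, and the mixed case is reduced via the Jacobi identity (introducing an auxiliary generator $a_{m+1}=[b_{r-1},b_r]$). You instead package the whole argument into the single observation that $V=M+\mathfrak{d}$, with $\mathfrak{d}=\{x\in H':[x,M]\subseteq M\}$, is a Lie subalgebra containing the generating set; \Cref{lem:3.3} is precisely what gives $[M,M]\subseteq\mathfrak{d}$, and the rest is formal. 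Your route is cleaner and avoids the case analysis and the auxiliary-generator trick, while the paper's argument is more explicit in that one can in principle read off a Malcev expression for each Lie commutator. Both hinge on \Cref{lem:3.3} at the same essential point.
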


\begin{proof}
We have $L = H \dotplus S$, where $S = \{a \in L | a^\sigma = a\}$ and $H^\alpha \subseteq S$.  Hence the subspace $H$ of $L$ is spanned by left-normed commutators $b = [\dotsc [b_1, b_2], b_3], \dotsc, b_r]$, where $b_1, \dotsc, b_r \in \{a_1, \dotsc, a_m, a_1^\alpha, \dotsc, a_m^\alpha\}$ and elements from $\{a_1, \dotsc, a_m\}$ occur in $b$ an odd number of times.

1. Suppose that $b_r = a_i^\alpha, 1 \leq i \leq m, b' = [ \dotsc [b_1, b_2], \dotsc, b_{r-1}]$.  Then by the induction assumption on $r$ the element $b'$ lies in the Malcev algebra $H'$ generated by $a_1, \dotsc, a_m$ and $b = [b', a_i^\alpha] = - a_i \ast b'$;

2.  Suppose that $b_r \in \{a_1, \dotsc, a_m\}$.  If the element $b_{r-1}$ also lies in $\{a_1, \dotsc, a_m\}$ and $b'' = [ \dotsc [b_1, \dotsc, b_{r-2}]]$ then by the induction assumption $b'' \in H'$.  In this case it remains to use Lemma \ref{lem:3.3}.

Let $b_{r-1} \in \{a_1^\alpha, \dotsc, a_m^\alpha\}$.  Then $b= [\dotsc [b_1, \dotsc, [b_{r-1}, b_r]] + [\dotsc [b_1, \dotsc, b_r], b_{r-1}]$.  By the induction assumption on $r$ applied to the elements $a_1, \dotsc, a_m, \linebreak a_{m+1} = [b_{r-1}, b_r] \in H'$ the first summand lies in $H'$.  The second summand was considered in case 1.  This completes the proof of the lemma.
\end{proof}

\section{Commutator identities in groups}\label{Section4}

Let $\Fr$ be the free group on free generators $x_i, i \geq 1; y, z_1, z_2.$  Recall the Hall commutator identity
\[
[xy,z] = [y,[z,x]] [x,z] [y,z],
\]
where $[x,y] = x^{-1}y^{-1}xy$ is the group commutator.

Let $N$ be the normal subgroup of $\Fr$ generated by the element $y$ and let $N'$ by the subgroup of $N$ generated by $[N,N]$ and by all elements $g^p, g \in N$.  Then $N/N'$ is a vectors space over the finite field $\Fp$.  For an element $g \in \Fr$ consider the linear transformation
\[
g': N/N' \to N/N', \;\; hN' \to [g,h]N'.
\]
Then the Hall identity implies
\[
(ab)' = a' + b' - b'a',
\]
or, equivalently, $1-(ab)' = (1-b')(1-a')$, where $1$ is the identity map.  Hence, $1 - (a^{p^n})' = (1-a')^{p^n}$.  This implies the following well known lemma

\begin{lemma}
$[\underbrace{x_1, [x_1, [ \dotsc [x_1}_{p^n}, y]] \dotsc ] = [x_1^{p^n}, y] \mod N'$.
\end{lemma}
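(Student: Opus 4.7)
The plan is to run exactly the operator-theoretic argument the paper sets up just before the statement. The displayed identity $(ab)' = a' + b' - b'a'$, rewritten as $1 - (ab)' = (1-b')(1-a')$, says that $g \mapsto 1 - g'$ is anti-multiplicative on products; restricted to powers of a single element $a$, an easy induction on $k$ gives $1 - (a^k)' = (1 - a')^k$ in $\End_{\Fp}(N/N')$ for every $k \geq 1$.

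Setting $a = x_1$ and $k = p^n$ reduces the lemma to a characteristic-$p$ computation inside this endomorphism ring. Since the ring is an $\Fp$-algebra and $1$ is central, iterated Frobenius yields
\[
(1 - x_1')^{p^n} = 1 - (x_1')^{p^n},
\]
and combining with the previous display gives $(x_1^{p^n})' = (x_1')^{p^n}$.

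It then remains to evaluate both endomorphisms at the class $y N' \in N/N'$. By the very definition of the map $'$, the left-hand side equals $[x_1^{p^n}, y]\, N'$. On the right-hand side, a trivial induction on $k$ unfolds $(x_1')^k(yN')$ to the left-nested commutator $[\underbrace{x_1, [x_1, [\dotsc [x_1}_{k}, y]] \dotsc ]\, N'$, and taking $k = p^n$ produces the stated congruence.

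The only technical point hidden in the setup is the well-definedness and $\Fp$-linearity of $g'$ on $N/N'$, which follows from $N'$ being normal in $\Fr$ (as $N' = N^p [N,N]$ and $N \triangleleft \Fr$) together with the Hall identity reduced modulo $N'$; once that is granted, the rest of the argument is purely formal and no real obstacle remains.
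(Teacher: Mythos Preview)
Your argument is correct and is exactly the approach the paper indicates: from $1-(ab)'=(1-b')(1-a')$ one gets $1-(a^{p^n})'=(1-a')^{p^n}$, then Frobenius in the $\Fp$-endomorphism ring yields $(a^{p^n})'=(a')^{p^n}$, and evaluating at $yN'$ gives the stated congruence. You have simply spelled out the steps the paper leaves to the reader after writing ``This implies the following well known lemma.''
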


\begin{corollary*}
$[[x_1, z_1], [[x_1, z_1],[\dotsc, [[x_1,z_1],y] \dotsc] = [[x_1, z_1]^{p^n}, y] \mod N'.$
\end{corollary*}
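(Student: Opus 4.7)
The plan is to observe that the preceding lemma uses nothing about $x_1$ beyond the fact that $x_1 \in \Fr$, so the same proof yields a statement uniform in $a \in \Fr$ from which the corollary follows by substitution. For every $a \in \Fr$ the map $a' \colon N/N' \to N/N'$ defined by $hN' \mapsto [a,h]N'$ is a well-defined $\Fp$-linear endomorphism, and the Hall commutator identity (together with the fact that $[N,N] \subseteq N'$) yields the product rule $1-(ab)' = (1-b')(1-a')$ for \emph{all} $a,b \in \Fr$, not only for the free generators.

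Iterating the product rule one obtains $1 - (a^{p^n})' = (1-a')^{p^n}$ in $\End_{\Fp}(N/N')$. Since $N/N'$ is a vector space over $\Fp$, every intermediate binomial coefficient $\binom{p^n}{k}$ with $0 < k < p^n$ annihilates $N/N'$, so $(1-a')^{p^n}$ collapses to $1 - (a')^{p^n}$. Evaluating $(a^{p^n})' = (a')^{p^n}$ on $yN'$ then gives
\[
[\underbrace{a, [a, [\dotsc [a}_{p^n}, y]] \dotsc ] \equiv [a^{p^n}, y] \pmod{N'}
\]
for any $a \in \Fr$.

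To conclude I apply this generalized lemma with $a = [x_1, z_1]$, which clearly lies in $\Fr$. The only step worth checking is that no part of the derivation of the product rule relied on $a,b$ being generators --- and indeed the Hall identity together with the normality of $N$ in $\Fr$ go through without change. There is no serious obstacle; the corollary is essentially an instance of the lemma with $x_1$ replaced by the commutator $[x_1, z_1]$.
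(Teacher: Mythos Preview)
Your proposal is correct and matches the paper's intent: the corollary is stated in the paper without proof, as an immediate consequence of the preceding lemma obtained by noting that the argument works for any $a \in \Fr$ and then specializing to $a = [x_1,z_1]$. Your explicit verification that the product rule and the freshman's dream in $\End_{\Fp}(N/N')$ require nothing about $a$ beyond membership in $\Fr$ is exactly the missing justification.
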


Applying the so called ``collection process'' of G. Higman \cite{11} (see also \cite{22}) we linearize this equality in $x_1$. 

\begin{lemma}\label{lem:4.2}
The product
\[
\prod\limits_{\pi \in S_{p^n}} [[ x_{\pi(1)}, z_1], [[x_{\pi(2)}, z_1], [\dotsc, [[x_{\pi(p^n)}, z_1],y] \dotsc ]
\]
with an arbitrary order of factors lies in the subgroup generated by elements $[[x_{i+1} \dotsm x_{i_r}, z_1]^{p^n}, y]$, $1 \leq i_1 < \dotsb < i_r \leq p^n$, and commutators $c$ in $y,z_1, x_1, \dotsc, x_{p^n}$ such that 
\begin{enumerate}[(1)]
\item $c$ involves all elements $y, x_1, \dotsc, x_{p^n}$,
\item some element $y$ or $x_j, 1 \leq j \leq p^n$, occurs in $c$ at least twice.
\end{enumerate}
\end{lemma}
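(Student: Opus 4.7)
The plan is to obtain the lemma as the multilinearization (polarization) in $x_1$ of the corollary to Lemma~4.1. I would substitute $x_1 \mapsto x_1 x_2 \cdots x_{p^n}$ in the corollary and then apply Hall's commutator identity --- i.e., Higman's collection process --- to extract the multilinear part in the variables $x_1, \dots, x_{p^n}$.

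Concretely, set $A := [x_1 x_2 \cdots x_{p^n}, z_1]$. The corollary gives
$$[A, [A, [\ldots, [A, y]\ldots]]] \equiv [A^{p^n}, y] \pmod{N'},$$
and the right-hand side $[A^{p^n}, y] = [[x_1 x_2 \cdots x_{p^n}, z_1]^{p^n}, y]$ is already one of the allowed generators of type~(a) (corresponding to $r = p^n$, $\{i_1, \ldots, i_r\} = \{1, \ldots, p^n\}$). I would then expand $A$ by iterating Hall's identity, obtaining
$$A = \Bigl(\prod_{i=1}^{p^n} [x_i, z_1]\Bigr) \cdot R,$$
where $R$ is a product of commutators each involving at least two $x_i$'s inside a single bracket. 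Substituting this into the $p^n$-fold nested commutator on the left and distributing via Hall's identity yields a product indexed by assignments of the $p^n$ outer slots to simple factors $[x_j, z_1]$ or to correction factors from $R$. The contributions coming from bijective assignments (each $x_i$ used exactly once) combine, after reordering, to the multilinear product $\prod_{\pi \in S_{p^n}} [[x_{\pi(1)}, z_1], [\ldots, [[x_{\pi(p^n)}, z_1], y]\ldots]]$.

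All remaining contributions belong to type~(b), provided every $x_i$ actually occurs: either the assignment used a factor from $R$ (so two $x_i$'s sit inside one bracket), or the same $x_i$ was chosen by two slots. To deal also with assignments in which some $x_j$ fails to appear, I would replay the construction for every nonempty subset $S \subseteq \{1, \ldots, p^n\}$ with $x_1 \mapsto \prod_{i \in S} x_i$ and combine the resulting identities by an alternating (inclusion--exclusion) product. The right-hand sides $[A_S^{p^n}, y]$ are all type~(a) generators, while on the left the non-surjective contributions cancel, leaving the desired multilinear product modulo type~(b). The main obstacle is the combinatorial bookkeeping inside Higman's collection: one must verify rigorously that every error term produced by Hall's identity has full variable support with the required repetition (hence is of type~(b)), and that the order of factors in $\prod_\pi$ is irrelevant modulo type~(b), since any two orderings differ by commutators each of which repeats some variable.
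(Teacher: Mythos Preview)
Your proposal is correct and follows exactly the approach the paper indicates: the paper's entire ``proof'' is the single sentence preceding the lemma, namely that one applies Higman's collection process to linearize the Corollary in $x_1$, and your substitution $x_1\mapsto x_1\cdots x_{p^n}$ together with inclusion--exclusion over subsets is precisely the standard implementation of that linearization. Your sketch in fact supplies considerably more detail than the paper does, including the observation that the phrase ``with an arbitrary order of factors'' is justified because reordering introduces only type~(b) commutators.
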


Consider again a group with triality $G$ and the Lie algebra with triality $L = L_p(G) = \sum\limits_{i=1}^\infty L_i$.  The subspace $H = \{a \in L | a^\sigma + a = 0\}$ is graded, i.e. $H = \sum\limits_{i=1}^\infty H_i$, $H_i = H \cap L_i$.

\begin{lemma}\label{lem:4.3}
Suppose that for an arbitrary element $g \in G$ we have $[g, \sigma]^{p^n} = 1$.  Then
\begin{enumerate}
\item \label{4.3Enum1} for an arbitrary homogeneous element $a \in H_i, i \geq 1$, we have $\ad(a)^{p^n} = 0$,
\item \label{4.3Enum2} for arbitrary homogeneous elements $a_1, \dotsc, a_{p^n}$ from $H$ we have
\[
\sum\limits_{\pi \in S_{p^n}} \ad(a_{\pi(1)}) \dotsb \ad(a_{\pi(p^n)}) = 0.
\]
\end{enumerate}
\end{lemma}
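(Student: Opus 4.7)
My plan is to translate the group-theoretic hypothesis $[g,\sigma]^{p^n}=1$ into Lie-algebra identities on $L=L_p(G)$ via the Zassenhaus filtration, combined with the collection-process identity of Lemma~\ref{lem:4.2}.

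For part (1), fix a homogeneous $a\in H_i$ with representative $c\in G_i$, so $a=cG_{i+1}$. The condition $a^\sigma=-a$ forces $c^\sigma c\in G_{i+1}$, and computing in $L_i=G_i/G_{i+1}$ gives
\[
[c,\sigma]\,G_{i+1} \;=\; c^{-1}c^\sigma G_{i+1} \;=\; -a+(-a) \;=\; -2a.
\]
The hypothesis applied to $g=c$ yields $[c,\sigma]^{p^n}=1$, so
\[
(-2)^{p^n}\,a^{[p^n]} \;=\; ([c,\sigma]G_{i+1})^{[p^n]} \;=\; [c,\sigma]^{p^n}\,G_{ip^n+1} \;=\; 0.
\]
Since $p\neq 2$, $a^{[p^n]}=0$, and iterating the $p$-algebra axiom $\ad(a^{[p]})=\ad(a)^p$ gives $\ad(a)^{p^n}=\ad(a^{[p^n]})=0$.

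For part (2), let $a_j\in H_{k_j}$ be homogeneous and set $m=\ell+\sum_j k_j$. The operator is graded of degree $\sum_j k_j$, so it suffices to verify its vanishing on an arbitrary $b\in L_\ell$ with representative $y\in G_\ell$. Choose $c_j\in G_{k_j}$ representing $a_j$, so (by the computation in part (1)) $[c_j,\sigma]G_{k_j+1}=-2a_j$. Applying Lemma~\ref{lem:4.2} in $G\rtimes\langle\sigma\rangle$ with the substitution $x_j\mapsto c_j$, $z_1\mapsto\sigma$, the product
\[
P \;=\; \prod_{\pi\in S_{p^n}} \bigl[[c_{\pi(1)},\sigma],\bigl[\dots,[[c_{\pi(p^n)},\sigma],y]\dots\bigr]\bigr]
\]
lies in the subgroup generated by the type-(a) elements $[[c_{i_1}\cdots c_{i_r},\sigma]^{p^n},y]$ and the type-(b) commutators of that lemma. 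The type-(a) elements vanish by the hypothesis. A type-(b) element is a commutator in $y,\sigma,c_1,\dots,c_{p^n}$ involving every one of $y,c_1,\dots,c_{p^n}$ with some repeat; because $\sigma$ preserves each $G_i$, brackets with $\sigma$ do not raise the weight, while ordinary brackets $[g,h]\in G_{a+b}$ (for $g\in G_a$, $h\in G_b$) add weights. Hence such an element lies in $G_w$ with $w\geq \ell+\sum_j k_j+\min(\ell,k_1,\dots,k_{p^n})\geq m+1$, so $P\in G_{m+1}$.

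Projecting to $L_m=G_m/G_{m+1}$ via $[xG_{i+1},zG_{j+1}]=[x,z]G_{i+j+1}$ identifies $PG_{m+1}$ with $(-2)^{p^n}\sum_\pi\ad(a_{\pi(1)})\cdots\ad(a_{\pi(p^n)})(b)$; cancelling the nonzero factor $(-2)^{p^n}$ gives the claim. The main subtlety is the weight bookkeeping for type-(b) commutators—one has to use that $\sigma$ carries no weight so only occurrences of $y,c_1,\dots,c_{p^n}$ count, and the repeat required by Lemma~\ref{lem:4.2} is exactly what raises the total weight from $m$ to $m+1$ and so kills the image in $L_m$.
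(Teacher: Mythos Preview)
Your argument is correct and follows the same route as the paper: represent homogeneous elements of $H$ by group elements of the form $[g,\sigma]$, use $[g,\sigma]^{p^n}=1$ to kill the $p^n$-th $p$-power (hence $\ad(a)^{p^n}$), and for part~(2) substitute into Lemma~\ref{lem:4.2} with $z_1\mapsto\sigma$ and project to $L_m$. The only cosmetic difference is that the paper takes $a=[g,\sigma]G_{i+1}$ directly (which is legitimate since $p\neq 2$), whereas you lift $a$ to an arbitrary $c$ and pick up the harmless factor $(-2)^{p^n}$; your explicit weight bookkeeping for the type-(b) commutators (assigning $\sigma$ weight~$0$) is exactly what the paper leaves implicit.
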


\begin{proof}
For a homogeneous element $a \in H_i$ there exists an element $g\ \in G_i$ such that $a = [g,\sigma]G_{i+1}.$  Then $a^{[p^n]} = [g,\sigma]^{p^n} G_{p^n i + 1} = 0$.  This implies $\ad(a)^{p^n} = \ad(a^{[p^n]}) = 0.$

Let $a_1, \dotsc, a_{p^n}$ be homogeneous elements from $H$, $a_i = [g_i,\sigma]G_{n(i) + 1}, \linebreak g_i \in G_{n(i)}, b = g' G_{j+1}, g'  \in G_j$.  Applying Lemma \ref{lem:4.2} to $x_i = g_i, \linebreak z_1 = \sigma, y = g'$ we get the assertion $(2)$.
\end{proof}

\begin{lemma}\label{lem:4.4}
For an arbitrary element $a \in H$ we have $[a, a^\rho] = 0.$
\end{lemma}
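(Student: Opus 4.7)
The plan is to show directly that $c:=[a,a^\rho]$ is both $\sigma$-antisymmetric (lies in $H$) and $\rho$-invariant, then apply the triality identity to conclude $3c=0$.

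First I would record the ``$H$-triality relation'': for any $a\in H$, applying the triality identity to $x=a$ gives $(a^\sigma-a)+(a^\sigma-a)^\rho+(a^\sigma-a)^{\rho^2}=0$, and since $a^\sigma-a=-2a$ and the characteristic is not $2$, this simplifies to
\[
a+a^\rho+a^{\rho^2}=0.
\]
This is the only structural fact about $H$ I need.

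Next I would show $c=[a,a^\rho]$ is $\rho$-invariant. Using $a^{\rho^2}=-a-a^\rho$, I compute
\[
c^\rho=[a^\rho,a^{\rho^2}]=[a^\rho,-a-a^\rho]=-[a^\rho,a]=[a,a^\rho]=c.
\]
Then I would verify $c\in H$. From $\rho^3=\sigma^2=(\rho\sigma)^2=1$ one gets $\sigma\rho=\rho^{2}\sigma$, so $(a^\rho)^\sigma=(a^\sigma)^{\rho^2}=-a^{\rho^2}$, and therefore
\[
c^\sigma=[a^\sigma,(a^\rho)^\sigma]=[-a,-a^{\rho^2}]=[a,a^{\rho^2}]=[a,-a-a^\rho]=-[a,a^\rho]=-c.
\]
Hence $c\in H$.

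Finally, I would apply the $H$-triality relation to $c$ itself: $c+c^\rho+c^{\rho^2}=0$. Combined with $c^\rho=c$ (hence $c^{\rho^2}=c$), this yields $3c=0$, and since the characteristic is not $3$, $c=0$, as desired. There is no real obstacle in this argument; the only thing to be careful about is the conjugation rule $\sigma\rho=\rho^{2}\sigma$ when computing $c^\sigma$, which is what ensures $c$ lands back in $H$ rather than in $S$.
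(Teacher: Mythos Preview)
Your proof is correct but follows a genuinely different route from the paper. The paper's argument is group-theoretic: it invokes the fact (from \cite{8}) that $[g,g^\rho]=1$ for every $g\in[G,\sigma]$ in the underlying group with triality, which immediately gives the homogeneous case, and then linearizes via the Hall identity and Higman's collection process in a free group to obtain $[a_i,a_j^\rho]+[a_j,a_i^\rho]=0$ for homogeneous $a_i,a_j$ of distinct degrees. Your argument, by contrast, is purely Lie-algebraic and never touches the group $G$: you show $c=[a,a^\rho]$ is $\rho$-fixed and lies in $H$, and then the triality identity forces $3c=0$. This is both more elementary and more general---it proves the lemma for \emph{any} Lie algebra with triality over a field of characteristic $\neq 2,3$, not only for $L_p(G)$. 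In fact your technique is exactly the one the paper itself uses in the proof of Lemma~\ref{lem:3.3} to establish \eqref{eq:xrhoy}; specializing $x=y=a$ there already yields $[a^\rho,a]=[a,a^\rho]$, hence $2[a,a^\rho]=0$. The paper's Example following Lemma~\ref{lem:4.4}, exhibiting a Lie algebra with triality where $[a,a^\rho]\neq 0$, is therefore necessarily a characteristic $2$ or $3$ phenomenon---your argument rules it out otherwise.
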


\begin{proof}
We have already mentioned that for an arbitrary element $g \in [G, \sigma]$  we have $[g, g^\rho] = 1$, see \cite{8}.  Hence,  $[g, g^\rho] = 0$ in $L(G)$.

Let $a_i \in H_i, a_j \in H_j$ be homogeneous elements.  We need to show that $[a_i, a_j^\rho] + [a_j, a_i^\rho] = 0$.  There exist elements $g_i \in G_i, g_j \in G_j$ such that $a_i = [g_i, \sigma] G_{i+1}, a_j = [g_j, \sigma]G_{j+1}$.  In the free group $\Fr$ consider the element 
\[
X = [[x_1, z_1], [x_2, z_1]^{z_2}] [[x_2, z_1], [x_1, z_1]^{z_2}].
\]
Applying the Hall identity and the Collection Process in the free group $\Fr$ we get
\[
[[x_1 x_2, z_1], [x_1 x_2, z_1]^{z_2}] = [[x_1, z_1], [x_1, z_1]^{z_2}][[x_2, z_1],[x_2, z_1]^{z_2}] \cdot X  \cdot c_1 \dotsm c_r,
\]
where $c_1, \dotsc, c_r$ are commutators in $x_1, x_2, z_1, z_2$; each of these commutators involved both elements $x_1, x_2$ and at least one of these elements occurs more than once.

Substitute $x_1 = g_i, x_2 = g_j, z_1 = \sigma, z_2 = \rho.$  Then the equality above in the free group $\Fr$ implies $X \in G_{i+j+1}$.  Hence $[a_i, a_j^\rho] + [a_j, a_i^\rho] = 0$, which completes the proof of the lemma.
\end{proof}

\begin{example}
Let $L$ be a nilpotent 3-dimensional Lie algebra with basis $a,b,c$ and multiplication $[a,b]=c, [a,c] = [b,c] = 0$.  The group $S_3$ acts on $L$ via $a^\sigma = -a, b^\sigma = a + b, c^\sigma = c, a^\rho = b, b^\rho = -a - b, c^\rho = c$.  The straightforward computation shows that $L$ is a Lie algebra with triality and that $[a, a^\rho] = -c \neq 0.$
\end{example}

\begin{lemma}\label{lem:4.5}
\begin{enumerate}
\item \label{4.5Enum1} For an arbitrary element $a \in H$, arbitrary $k \geq 1$, we have $\ad(a^\alpha)^{p^k} = \ad(a)^{p^k} + 2 \rho^{-1} \ad(a)^{p^k} \rho$;
\item \label{4.5Enum2} for arbitrary elements $a_1, \dotsc, a_{p^k} \in H$ we have
\begin{align*}
&\sum\limits_{\pi \in S_{p^k}} \ad(a_{\pi(1)}^\alpha) \dotsm \ad(a_{\pi(p^k)}^\alpha) \\
= &\sum\limits_{\pi \in S_{p^k}} \ad(a_{\pi(1)}) \dotsm \ad (a_{\pi(p^k)}) + 2 \rho^{-1} \sum\limits_{\pi \in S_{p^k}} \ad(a_{\pi(1)}) \dotsm \ad (a_{\pi(p^k)}) \rho.
\end{align*}
\end{enumerate}
\end{lemma}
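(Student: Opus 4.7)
The plan is to prove part (1) directly, using Lemma \ref{lem:4.4} together with the freshman's dream in characteristic $p$, and then to obtain part (2) from (1) by a polarization in a scalar extension.

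For (1), I would first write $\ad(a^\alpha) = \ad(a) + 2\ad(a^\rho)$, using $\alpha = 1 + 2\rho$ and the linearity of $\ad$. Since $\rho$ is an automorphism of $L$, one has $\ad(a^\rho) = \rho^{-1}\ad(a)\rho$ in $\End(L)$; set $A := \ad(a)$ and $B := \rho^{-1}A\rho = \ad(a^\rho)$, so that $\ad(a^\alpha) = A + 2B$. Lemma \ref{lem:4.4} supplies $[a,a^\rho] = 0$; applying the Lie algebra homomorphism $\ad\colon L \to \End(L)$ yields $[A,B] = \ad([a,a^\rho]) = 0$, so $A$ and $B$ commute as associative operators. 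The freshman's dream then gives
\[
(A+2B)^{p^k} = A^{p^k} + 2^{p^k}B^{p^k},
\]
since all intermediate binomial coefficients $\binom{p^k}{i}$, $0<i<p^k$, vanish in $\Fp$. Fermat's little theorem gives $2^{p^k}\equiv 2 \pmod p$, and $B^{p^k} = (\rho^{-1}A\rho)^{p^k} = \rho^{-1}A^{p^k}\rho$; assembling these produces $\ad(a^\alpha)^{p^k} = \ad(a)^{p^k} + 2\rho^{-1}\ad(a)^{p^k}\rho$.

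For (2), I would polarize (1) in a scalar extension. Put $R := \Fp[t_1,\dots,t_{p^k}]$ and $L_R := L\otimes_{\Fp} R$, extending $\sigma,\rho$ trivially on $R$, so $L_R$ is a Lie algebra with triality over $R$. Let $b := \sum_{i=1}^{p^k} t_i a_i$; then $b^\sigma = -b$, and polarizing the cubic identity $[x,x^\rho]=0$ of Lemma \ref{lem:4.4} gives the bilinear identity $[a_i,a_j^\rho] + [a_j,a_i^\rho] = 0$ on $H$, whence
\[
[b,b^\rho] = \sum_{i,j} t_i t_j\, [a_i,a_j^\rho] = 0
\]
in $L_R$. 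Part (1) therefore applies to $b$, giving $\ad(b^\alpha)^{p^k} = \ad(b)^{p^k} + 2\rho^{-1}\ad(b)^{p^k}\rho$ as $R$-linear operators on $L_R$. Using $\ad(b^\alpha) = \sum_i t_i\ad(a_i^\alpha)$ and $\ad(b) = \sum_i t_i\ad(a_i)$ and extracting the coefficient of $t_1 t_2\cdots t_{p^k}$ from each side, the left-hand side contributes $\sum_{\pi\in S_{p^k}} \ad(a_{\pi(1)}^\alpha)\cdots\ad(a_{\pi(p^k)}^\alpha)$, and the right-hand side contributes $\sum_\pi \ad(a_{\pi(1)})\cdots\ad(a_{\pi(p^k)}) + 2\rho^{-1}\sum_\pi \ad(a_{\pi(1)})\cdots\ad(a_{\pi(p^k)})\rho$, which is precisely (2).

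The only genuinely nonformal input is the commutation $[\ad(a),\ad(a^\rho)] = 0$ in (1), which is supplied by Lemma \ref{lem:4.4}; the remainder of (1) is a characteristic-$p$ binomial expansion together with Fermat, while (2) is then a clean multilinearization. Consequently the main thing to verify carefully is that the hypothesis of Lemma \ref{lem:4.4} propagates to $L_R$ — which, as observed, it does by bilinearity — after which no further obstacle arises.
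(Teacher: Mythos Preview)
Your proof is correct and follows essentially the same route as the paper: part (1) is proved exactly as you do, via $[a,a^\rho]=0$ from Lemma~\ref{lem:4.4} and the $p$-th power expansion for commuting operators, and the paper then simply says part (2) ``is obtained from (1) by linearization,'' which is precisely your scalar-extension argument made explicit. (One trivial slip: the identity $[x,x^\rho]=0$ is quadratic in $x$, not cubic, but your polarization of it is correct.)
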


\begin{proof}
We only need to prove part \eqref{4.5Enum1}.  Part \eqref{4.5Enum2} is obtained from \eqref{4.5Enum1} by linearization.  We have $a^\alpha = a + 2a^\rho$.  By Lemma \ref{lem:4.4} $[a, a^\rho] = 0$.  Hence,
\[
\ad(a^\alpha)^{p^k} = \ad(a)^{p^k} + 2^{p^k}\ad(a^\rho)^{p^k} = \ad(a)^{p^k} + 2 \rho^{-1} \ad(a)^{p^k} \rho.
\]
This completes the proof of the lemma.
\end{proof}

We remark that the proof of linearized Engel identity in \cite{6} contains a gap that is filled in this paper.

For an element $a \in H$ let $\ad^*(a)$ denote the operator of multiplication by $a$ in the Malcev algebra, $\ad^*(a): h \to a \ast h, \ad^*(a) = \ad(a^\alpha)$.

\begin{lemma}\label{lem4.6}
\begin{enumerate}
\item \label{4.6Enum1} For an arbitrary homogeneous element $a \in H_i, \linebreak i \geq 1$, we have $\ad^*(a)^{p^n} = 0;$
\item \label{4.6Enum2} for arbitrary elements $a_1, \dotsc, a_{p^n} \in H$ we have 
\[ 
\sum\limits_{\pi \in S_{p^n}}\ad^*(a_{\pi(1)}) \dotsm \ad^*(a_{\pi(p^n)}) = 0.
\] 
\end{enumerate}
\end{lemma}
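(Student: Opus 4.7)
The plan is to deduce both parts of Lemma~\ref{lem4.6} directly from the analogous statements for $\ad$ (Lemma~\ref{lem:4.3}), using the bridging identities of Lemma~\ref{lem:4.5} as the dictionary that converts powers of $\ad^*(a) = \ad(a^\alpha)$ into powers of $\ad(a)$.

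For part~(\ref{4.6Enum1}), I would simply apply Lemma~\ref{lem:4.5}(\ref{4.5Enum1}) with $k = n$ to write
\[
\ad^*(a)^{p^n} = \ad(a^\alpha)^{p^n} = \ad(a)^{p^n} + 2\rho^{-1}\ad(a)^{p^n}\rho.
\]
Under the standing hypothesis $[g,\sigma]^{p^n} = 1$ for all $g \in G$ (which is precisely the condition that the associated Moufang loop $U$ has exponent dividing $p^n$), Lemma~\ref{lem:4.3}(\ref{4.3Enum1}) supplies $\ad(a)^{p^n} = 0$ for the homogeneous element $a \in H_i$, and both summands on the right vanish.

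For part~(\ref{4.6Enum2}), applying the fully linearized Lemma~\ref{lem:4.5}(\ref{4.5Enum2}) rewrites the symmetric sum as
\begin{align*}
\sum_{\pi \in S_{p^n}} \ad^*(a_{\pi(1)}) \cdots \ad^*(a_{\pi(p^n)})
&= \sum_{\pi \in S_{p^n}} \ad(a_{\pi(1)}) \cdots \ad(a_{\pi(p^n)}) \\
&\quad + 2\rho^{-1}\Bigl(\sum_{\pi \in S_{p^n}} \ad(a_{\pi(1)}) \cdots \ad(a_{\pi(p^n)})\Bigr)\rho.
\end{align*}
The symmetric sum on the right is multilinear in $a_1,\ldots,a_{p^n}$, so decomposing each $a_i = \sum_j a_{i,j}$ with $a_{i,j} \in H_j$ and expanding reduces the claim to the case in which all inputs are homogeneous, which is exactly Lemma~\ref{lem:4.3}(\ref{4.3Enum2}).

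I expect no serious obstacle here: the heavy lifting has already been carried out in the proofs of Lemmas~\ref{lem:4.3} and~\ref{lem:4.5}. The two minor points to get right are invoking the exponent hypothesis $[g,\sigma]^{p^n} = 1$ from the ambient setting, and performing the multilinearity step to pass from the homogeneous form of Lemma~\ref{lem:4.3}(\ref{4.3Enum2}) to arbitrary elements of $H$.
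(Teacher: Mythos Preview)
Your proposal is correct and matches the paper's own proof exactly: the paper simply says that assertion~(\ref{4.6Enum1}) follows from Lemma~\ref{lem:4.3}(\ref{4.3Enum1}) and Lemma~\ref{lem:4.5}(\ref{4.5Enum1}), and assertion~(\ref{4.6Enum2}) from Lemma~\ref{lem:4.3}(\ref{4.3Enum2}) and Lemma~\ref{lem:4.5}(\ref{4.5Enum2}). Your additional remark on the multilinearity step (to pass from the homogeneous inputs of Lemma~\ref{lem:4.3}(\ref{4.3Enum2}) to arbitrary $a_i\in H$) is a useful clarification that the paper leaves implicit.
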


\begin{proof}
Assertion \eqref{4.6Enum1} follows from Lemma \ref{lem:4.3}.\eqref{4.3Enum1} and Lemma \ref{lem:4.5}.\eqref{4.5Enum1}. Assertion \eqref{4.6Enum2} follows from Lemma \ref{lem:4.3}.\eqref{4.3Enum2} and Lemma \ref{lem:4.5}.\eqref{4.5Enum2}.
\end{proof}


\section{Local nilpotence in Malcev algebras}\label{Section5}

\begin{proposition}\label{prop:5.1}
Let $M = M_1 + M_2 + \dotsb$ be a finitely generated graded Malcev algebra over a field of characteristic $p \neq 2,3$, such that
\begin{enumerate}[(i)]
\item \label{PropEnumi} $\ad^*(a)^{p^n} = 0$ for an arbitrary homogeneous element $a \in M$,
\item \label{PropEnumii} $\sum\limits_{\pi \in S_{p^n}} \ad^*(a_{\pi(1)}) \dotsm \ad^*(a_{\pi(p^n)}) = 0$ for arbitrary $a_1, \dotsc, a_{p^n} \in M$.
\end{enumerate}
Then the Malcev algebra $M$ is nilpotent and finite dimensional.
\end{proposition}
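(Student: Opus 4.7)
The plan is to reduce Proposition~\ref{prop:5.1} to Zelmanov's solution of the Restricted Burnside Problem for Lie $p$-algebras by passing to a suitable Lie envelope. Concretely, I build a graded Lie algebra with triality $L = \bigoplus_{i \geq 1} L_i$ over $\Fp$ whose $(-1)$-eigenspace $H$ under $\sigma$ recovers $M$ with the Malcev multiplication $a \ast b = [a + 2a^\rho, b]$, and which additionally satisfies the crucial identity $[a, a^\rho] = 0$ for all $a \in H$ (the analogue of Lemma~\ref{lem:4.4}). Such an envelope is supplied by the general correspondence between Malcev algebras in characteristic $\neq 2, 3$ and Lie algebras with triality referenced in the paper. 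If $M$ is generated by homogeneous elements $a_1, \dotsc, a_m$, then $L$ is generated by the finite set $\{a_i, a_i^\alpha : 1 \leq i \leq m\}$, hence is finitely generated as a graded Lie algebra.

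Next I lift conditions \eqref{PropEnumi} and \eqref{PropEnumii} from $M$ to Engel-type identities on $L$. Since $[a, a^\rho] = 0$ in $L$, the proof of Lemma~\ref{lem:4.5} applies verbatim and gives $\ad(a^\alpha)^{p^n} = \ad(a)^{p^n} + 2 \rho^{-1} \ad(a)^{p^n} \rho$ for homogeneous $a \in H$. Writing $X = \ad(a)^{p^n}$, condition \eqref{PropEnumi} gives $X + 2\rho^{-1} X \rho = 0$, i.e.\ $\rho^{-1} X \rho = -X/2$; iterating and using $\rho^3 = 1$ yields $X = \rho^{-3} X \rho^3 = (-1/2)^3 X = -X/8$, so $9X = 0$ and hence $X = 0$ since $p > 3$. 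An analogous argument applied to Lemma~\ref{lem:4.5}.\eqref{4.5Enum2} and condition \eqref{PropEnumii} produces the multilinear identity
\[
\sum_{\pi \in S_{p^n}} \ad(a_{\pi(1)}) \dotsm \ad(a_{\pi(p^n)}) = 0
\]
on $L$ for homogeneous $a_1, \dotsc, a_{p^n} \in H$. Because $L$ is generated by $H \cup H^\rho$ (equivalently by the $a_i, a_i^\alpha$) and because $\rho$ is an automorphism, conjugating by powers of $\rho$ propagates this multilinear $p^n$-Engel identity to all homogeneous generators of $L$.

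Now pass to the minimal $p$-envelope $\widetilde{L}$ of $L$ inside its universal enveloping algebra. The $p$-envelope inherits the grading, the finite generation (by the $p$-closures of the $a_i, a_i^\alpha$), and the multilinear Engel identity. By Zelmanov's theorem for Lie $p$-algebras, any finitely generated graded Lie $p$-algebra over $\Fp$ satisfying the linearized $p^n$-Engel identity is nilpotent; a nilpotent finitely generated graded Lie algebra is automatically finite dimensional. Hence $\widetilde{L}$, $L$, and in particular the subspace $M = H \subseteq L$ are nilpotent and finite dimensional.

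The principal obstacle is the construction in the first paragraph: one needs a Lie envelope $L$ satisfying $[a, a^\rho] = 0$ for $a \in H$ and preserving the grading and finite generation of $M$. The Example following Lemma~\ref{lem:4.4} shows that generic Lie algebras with triality fail this vanishing, so a naive free construction cannot be used; one must invoke the Malcev--Lie correspondence carefully. A secondary technicality is ensuring that the $\ast$-Engel identity on $M$ propagates, via the $\rho$-symmetry, to a genuine $\ad$-Engel identity on all homogeneous generators of $L$ (not merely those in $H$), which is what enables the application of Zelmanov's theorem to the $p$-envelope.
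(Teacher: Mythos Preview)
Your approach is genuinely different from the paper's, and the two obstacles you flag at the end are real gaps that are not resolved.

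First, the construction of a Lie envelope $L$ with $[a,a^\rho]=0$ for all $a\in H$ is not established. In the paper this identity (Lemma~\ref{lem:4.4}) is proved only for $L_p(G)$ arising from a group with triality, via group--commutator calculations; the Example immediately after Lemma~\ref{lem:4.4} shows that generic Lie algebras with triality fail it. The correspondence of \cite{7} that you invoke runs from Lie algebras with triality to Malcev algebras; you give no argument that a reverse functor exists producing a graded, finitely generated envelope in which $[a,a^\rho]=0$ holds. Without this identity Lemma~\ref{lem:4.5} does not apply, and the passage from the $\ad^*$--Engel conditions on $M$ to $\ad$--Engel conditions on $L$ collapses.

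Second, even granting such an envelope, your propagation step does not work. You obtain $\sum_{\pi}\ad(a_{\pi(1)})\cdots\ad(a_{\pi(p^n)})=0$ only for tuples with all $a_i\in H$; conjugating by powers of $\rho$ yields the identity for tuples lying entirely in $H^\rho$ or $H^{\rho^2}$, but not for mixed tuples, and $H$ does not span $L$ (one has $L=H\dotplus S$ with $[H,H]\subseteq S$). Thus neither the linearized Engel identity on all of $L$ nor ad--nilpotence of arbitrary commutators in the generators $a_i,a_i^\alpha$ is verified, and the hypotheses of Zelmanov's theorem for $L$ (or its $p$-envelope) are not met.

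The paper avoids both difficulties by never leaving the Malcev category. It builds the locally nilpotent radical $\loc(M)$ (Lemmas~\ref{lem:5.3}--\ref{lem:5.4}), shows via Lemma~\ref{lem:5.5} that a non--locally--nilpotent $M$ has a nonzero prime quotient, and then applies Filippov's dichotomy \cite{4}: a prime Malcev algebra in characteristic $>3$ is either Lie, where Zelmanov's theorem \cite{20} applies directly, or $7$--dimensional over its centroid, where Stitzinger's Engel theorem \cite{19} finishes the argument. Zelmanov's result is thus invoked only on honest Lie quotients of $M$, with no envelope needed.
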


If $I$ is an ideal of a Malcev algebra $M$ then $\widetilde {I} = I^2 + I^2 \cdot M$ is also an ideal of $M$.  Consider the descending chain of ideals \linebreak $M^{[0]} = M, M^{[i+1]} = \widetilde{M^{[i]}}$.  We say that a Malcev algebra $M$ is \underline{solvable} if $M^{[n]} = (0)$ for some $n \geq 1$.

\begin{lemma}[V.T. Filippov, \cite{4}]\label{lem:5.1}
A finitely generated solvable Malcev algebra over a field of characteristic $>3$ is nilpotent if and only if each of its Lie homomorphic images is nilpotent.
\end{lemma}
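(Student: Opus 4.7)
The forward direction is immediate, since nilpotence passes to homomorphic images. For the converse, the plan is to induct on the derived length $k$ with $M^{[k]} = (0)$, the case $k \leq 1$ being trivial since then $M$ is abelian.

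For the inductive step, set $A = M^{[k-1]}$, a nonzero ideal satisfying $A \cdot A = 0$ and $(A \cdot A) \cdot M = 0$ (in particular abelian). The quotient $M/A$ has smaller derived length, and every Lie homomorphic image of $M/A$ is a Lie homomorphic image of $M$, hence nilpotent by hypothesis. So the induction hypothesis gives that $M/A$ is nilpotent, and it suffices to bound the nilpotence of the action of $M$ on $A$ by the right multiplications $\ad^*(a)$, $a \in M$.

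To control this action I would introduce the Jacobian ideal $J \trianglelefteq M$ generated by all $J(x,y,z) = (xy)z + (yz)x + (zx)y$; by construction $M/J$ is a Lie algebra, and so by hypothesis it is nilpotent. Since $M$ is finitely generated this forces $M/J$ to be finite-dimensional, so modulo $J$ the action of $M$ on $A$ is manifestly nilpotent. The remaining and essential content is to show that the operators $\ad^*(a)|_A$ for $a \in J$ generate a nilpotent Lie algebra of transformations of $A$; this is the Malcev analogue of the classical Lie-theoretic fact that a solvable algebra acts nilpotently on an abelian ideal. For this one exploits the Malcev identity specialized to an argument in the abelian ideal $A$, which collapses to strong relations among the $\ad^*(a)|_A$.

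The main obstacle, and the reason for the characteristic hypothesis $p > 3$, is exactly this last step: one must linearize the Malcev identity far enough to reduce the action of $J$ on the abelian ideal $A$ to an honest Lie-algebraic statement, and the standard reductions (going back to Sagle and completed by Filippov) involve dividing by $2$ and $3$. Once the Lie-nilpotence of $\{\ad^*(a)|_A : a \in J\}$ is in place, combining with the finite-dimensional nilpotent action of $M/J$ and the nilpotence of $M/A$ bounds the lower central series of $M$, completing the induction.
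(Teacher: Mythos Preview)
The paper does not supply a proof of this lemma at all: it is simply quoted as a result of V.\,T.~Filippov with a reference to \cite{4}. So there is no ``paper's own proof'' to compare against; any self-contained argument you produce is already more than the paper offers.

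That said, your proposal is an outline of a strategy rather than a proof, and you say as much: the step you flag as ``the remaining and essential content'' --- that the operators $\ad^*(a)|_A$ for $a\in J$ generate a nilpotent algebra of transformations of $A$ --- is precisely the substance of Filippov's theorem, and you have not indicated how to carry it out. Moreover, the preceding step is also not justified: knowing that $M/J$ is a finite-dimensional nilpotent Lie algebra does not by itself make ``the action of $M$ on $A$ modulo $J$'' manifestly nilpotent, because the representation $m\mapsto \ad^*(m)|_A$ has no reason to factor through $M/J$. You would need to show that on the abelian ideal $A$ the right multiplications satisfy enough Lie-type relations (this is where the Malcev identity, specialized to one argument in $A$, does real work) to reduce to a genuine Lie-module situation; that reduction, together with the control of the $J$-part, is exactly what Filippov proves in \cite{4}, and neither piece is routine. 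As written, your text identifies the right skeleton and the right obstacle but does not supply the argument.
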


Consider the free Malcev algebra $M(m)$ on $m$ free generators $x_1, \dotsc, x_m$.  As always $\bbn = \{1,2, \dotsc \}$ is the set of positive integers.  The algebra $M(m)$ is $\bbn^m$-graded via 
\[
\deg(x_i) = (0,0, \dotsc, \underset{i}{1}, 0, \dotsc, 0), \;\; 1 \leq i \leq m, \;\; M(m) = \bigoplus\limits_{\gamma \in \bbn^m} M(m)_\gamma.
\]
Let $I$ be the ideal of $M(m)$ generated by elements $\underbrace{a (a ( \dotsb a}_{p^n} b) \dotsb )$ and elements $\sum\limits_{\pi \in S_{p^n}} a_{\pi(1)} (a_{\pi(2)} ( \dotsb (a_{\pi(p^n)} b) \dotsb )$, where $a, a_1, \dotsc, a_{p^n}, b$ run over all homogeneous elements of $M(m)$.  Let
\[
M(m,p^n) = M(m)/I
\]

\begin{lemma}\label{lem:5.2}
The algebra $M(m,p^n)^2$ is finitely generated.
\end{lemma}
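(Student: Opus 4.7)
The plan is to exhibit a finite-dimensional subspace $V \subseteq M(m,p^n)^2$ such that $V$ generates $M(m,p^n)^2$ as a Malcev subalgebra.

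I begin by noting that $M(m,p^n)$ inherits a natural $\bbn^m$-grading from the free Malcev algebra $M(m)$, and each multihomogeneous component $M(m,p^n)_d$ with $d = (d_1, \dotsc, d_m) \in \bbn^m$ is finite-dimensional, since there are only finitely many distinct Malcev monomials of a given multidegree in finitely many generators. Hence it suffices to prove that there exists a total-degree bound $D = D(m, p^n)$ such that every multihomogeneous element of $M(m,p^n)^2$ of total degree $d_1 + \dotsb + d_m > D$ lies in the Malcev subalgebra generated by $V := \bigoplus_{2 \leq |d| \leq D} M(m,p^n)_d$.

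The heart of the argument is a degree-reduction procedure. Given a Malcev monomial $w$ in $M(m,p^n)^2$ of large total degree, the pigeonhole principle yields some generator $x_i$ appearing at least $p^n$ times in $w$. Using the Malcev identity together with Lemma \ref{lem:3.3} (the relation $3[[a,b],c] = 2(a \ast b) \ast c + (c \ast b) \ast a + (a \ast c) \ast b$ transported from the Lie envelope), one can reshape the bracketing of $w$, modulo a sum of monomials of strictly smaller total degree, to expose an outer factor of the form $\ad^*(x_i)^{p^n}$ acting on a core element of lower degree. Part \eqref{4.6Enum1} of Lemma \ref{lem4.6} (the Engel identity $\ad^*(a)^{p^n} = 0$ for homogeneous $a$) then annihilates this factor, leaving only the lower-degree remainder; by induction on total degree, every remainder lies in the Malcev subalgebra generated by $V$.

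The main obstacle is the non-associativity of Malcev algebras. Unlike in the Lie setting, where $\ad(x)^{p^n} = 0$ reduces left-normed commutators directly, in a Malcev monomial the $p^n$ copies of $x_i$ are scattered throughout an arbitrary bracketing and generically do not sit in outer $\ad^*$ positions. The linearized Engel identity, part \eqref{4.6Enum2} of Lemma \ref{lem4.6}, is the crucial additional tool: it permits controlled symmetrization of nested applications of $\ad^*$-operators, which is what enables us to push the repeated copies of $x_i$ through a non-associative bracketing and collect them into an outer $\ad^*(x_i)^{p^n}$. Carrying out this reshaping via iterated applications of the Malcev identity and Lemma \ref{lem:3.3} while keeping precise track of the residual lower-degree terms, and verifying that the procedure terminates with a degree bound $D$ depending only on $m$ and $p^n$, is the technical heart of the argument.
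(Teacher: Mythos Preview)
Your proposal has a genuine gap at exactly the point you flag as ``the technical heart.'' The reshaping step---rewriting an arbitrary Malcev monomial with $p^n$ occurrences of some $x_i$ so as to expose an outer factor $\ad^*(x_i)^{p^n}$---is not a known procedure, and there is no reason to expect it can be carried out by iterated applications of the Malcev identity together with the Engel relations. In a Malcev algebra there is no analog of the Lie fact that every monomial is a sum of left-normed ones, so the copies of $x_i$ cannot in general be migrated to outer $\ad^*$ positions. Your appeal to Lemma~\ref{lem:3.3} is also misplaced: that lemma expresses a \emph{Lie} triple bracket $[[a,b],c]$ inside a Lie algebra with triality in terms of Malcev products; it is not an identity in the abstract Malcev algebra $M(m,p^n)$ and gives you no handle on rearranging Malcev monomials. (Likewise Lemma~\ref{lem4.6} concerns the specific algebra $H$; the Engel relations hold in $M(m,p^n)$ simply by the definition of the ideal $I$.) Finally, your phrase ``modulo a sum of monomials of strictly smaller total degree'' is puzzling: the Malcev identity and the Engel relations are multihomogeneous, so no lower-degree terms are produced.

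The paper's argument is entirely different and avoids any combinatorial rearrangement. It invokes three structural facts: Kuzmin's inclusion $M^{[3]} \subseteq M^2 \cdot M^2$ valid in any Malcev algebra; Zelmanov's theorem that every Lie homomorphic image of $M(m,p^n)$ is nilpotent; and Filippov's Lemma~\ref{lem:5.1}, which together with solvability of $M(m,p^n)/M(m,p^n)^{[3]}$ yields a $t$ with $M(m,p^n)^t \subseteq M(m,p^n)^{[3]} \subseteq M(m,p^n)^2 \cdot M(m,p^n)^2$. Once this inclusion is in hand, the $\bbn^m$-grading and a trivial induction on total degree show that $M(m,p^n)^2$ is generated by products of length between $2$ and $t-1$. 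The heavy lifting is thus outsourced to the Lie case via Filippov's reduction; your direct attack would, even if it could be made to work, amount to reproving a Malcev analog of Zelmanov's theorem from scratch.
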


\begin{proof}
E.N. Kuzmin (see \cite{14}) showed that for an arbitrary Malcev algebra $M$ we have $M^{[3]} \subseteq M^2 \cdot M^2$.  By \cite{20} every Lie homomorphic image of $M(m,p^n)$ is a nilpotent algebra.  Hence by Lemma \ref{lem:5.1} of V.T. Filippov there exists $t \geq 1$ such that $M(m,p^n)^t \subseteq M(m,p^n)^{[3]} \subseteq M(m,p^n)^2 M(m,p^n)^2$.  Since the algebra $M(m,p^n)$ is $\bbn^m$-graded it implies that $M(m,p^n)^2$ is generated by products of $x_1, \dotsc, x_m$ of length $\ell , 2 \leq \ell \leq t-1$.  This completes the proof of the lemma.
\end{proof}

Recall that an algebra is said to be \underline{locally nilpotent} if every finitely generated subalgebra is nilpotent.

\begin{lemma}\label{lem:5.3}
Let $M = M_1 + M_2 + \dotsb $ be a graded Malcev algebra that satisfies the assumptions \eqref{PropEnumi}, \eqref{PropEnumii} of the Proposition.  Let $I$ be an ideal of $M$ such that both $I$ and $M/I$ are locally nilpotent.  Then the algebra $M$ is locally nilpotent.
\end{lemma}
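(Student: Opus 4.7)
The plan is to show that every finitely generated subalgebra $N$ of $M$ is nilpotent. Using the grading of $M$, I may decompose each generator of $N$ into homogeneous components and assume $N = \langle a_1, \dotsc, a_m \rangle$ with all $a_i$ homogeneous, so that $N$ is a finitely generated graded Malcev subalgebra still satisfying (i) and (ii). The image $N/(N \cap I)$ embeds into the locally nilpotent algebra $M/I$, is finitely generated, and so is nilpotent; in particular, $N$ is solvable modulo $I$ in the sense that some term $N^{(d)}$ of the derived series (with $N^{(0)} = N$, $N^{(k+1)} = (N^{(k)})^2$) lies in $N \cap I$.

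The key step is to show that each derived subalgebra $N^{(k)}$ is itself finitely generated. The proof of Lemma \ref{lem:5.2} uses only that the ambient algebra is a finitely generated graded Malcev algebra satisfying (i) and (ii), together with Kuzmin's inclusion $A^{[3]} \subseteq A^2 \cdot A^2$, Filippov's Lemma \ref{lem:5.1}, and the nilpotence of Lie homomorphic images via \cite{20}. All of these hypotheses are inherited by the $N^{(k)}$, which are themselves finitely generated graded Malcev subalgebras satisfying (i), (ii). So applying the argument of Lemma \ref{lem:5.2} to $N^{(k)}$ in place of $M(m,p^n)$ gives that $N^{(k+1)} = (N^{(k)})^2$ is finitely generated; induction then shows every $N^{(k)}$ is finitely generated.

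Now $N^{(d)} \subseteq I$ is finitely generated and $I$ is locally nilpotent, so $N^{(d)}$ is nilpotent and hence $N$ itself is solvable. Filippov's Lemma \ref{lem:5.1} then reduces the nilpotence of $N$ to the nilpotence of each of its Lie homomorphic images; every such image is a finitely generated Lie algebra in which $\ad^* = \ad$ and which inherits the Engel-type identities (i), (ii), hence is nilpotent by Zelmanov's theorem \cite{20}. Thus $N$ is nilpotent, and since $N$ was arbitrary, $M$ is locally nilpotent. The main obstacle I foresee is the second step, namely verifying that the proof of Lemma \ref{lem:5.2} really does carry over verbatim to $N^{(k)}$ at each stage; in particular, one must check that Zelmanov's theorem continues to apply to the Lie homomorphic images of $N^{(k)}$, possibly invoking the linearized identity (ii) rather than (i) alone to ensure the correct form of the Engel condition is available in each quotient.
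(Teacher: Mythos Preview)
Your argument is correct and follows essentially the same route as the paper: reduce to a subalgebra $M'$ generated by finitely many homogeneous elements, use Lemma~\ref{lem:5.2} to see that squaring preserves finite generation, descend until you land inside $I$, and then invoke Filippov's Lemma~\ref{lem:5.1} together with \cite{20}. The paper packages the descent as an induction on $\dim_{\Fp}\bigl((M'+I)/I\bigr)$, which is finite since $M'/(M'\cap I)$ is a finitely generated nilpotent algebra, while you unwind this induction explicitly along the derived series; the content is the same, and your worry about re-running Lemma~\ref{lem:5.2} for each $N^{(k)}$ is unnecessary once you note, as the paper does, that any such subalgebra is a homomorphic image of some $M(m',p^n)$.
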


\begin{proof}
Let $M'$ be a subalgebra of $M$ generated by $m$ homogeneous elements.  Then $M'$ is a homomorphic image of the Malcev algebra $M(m,p^n)$.  By Lemma \ref{lem:5.2} the algebra $(M')^2$ is finitely generated.  Arguing by induction on $\dim (M' + I/I)$ we can assume that the algebra $(M')^2$ is nilpotent.  Hence $M'$ is a solvable algebra.  By \cite{20} all Lie homomorphic images of the algebra $M'$ are nilpotent.  Hence by Lemma \ref{lem:5.1} the algebra $M'$ is nilpotent, which completes the proof of the lemma.
\end{proof}

\begin{lemma}\label{lem:5.4}
Let $M = M_1 + M_2 + \dotsb $ be a graded Malcev algebra that satisfies the assumptions \eqref{PropEnumi}, \eqref{PropEnumii} of the Proposition.  Then $M$ contains a largest graded locally nilpotent ideal $\loc(M)$ such that the factor algebra $M/\loc(M)$ does not contain nonzero locally nilpotent ideals.  
\end{lemma}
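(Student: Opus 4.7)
The plan is to set $\loc(M)$ equal to the sum of all graded locally nilpotent ideals of $M$ and then verify two things: that it is itself locally nilpotent (hence the largest such ideal) and that the quotient $M/\loc(M)$ contains no nonzero graded locally nilpotent ideal. Both halves rely on \cref{lem:5.3} as the mechanism for combining local nilpotence across an extension, and the main subtlety is simply to arrange that every ideal and lift in sight is graded so that the hypotheses of \cref{prop:5.1} transfer to it.

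First I would show that the class of graded locally nilpotent ideals is closed under finite sums. Let $I_1, I_2 \trianglelefteq M$ be two such ideals. Their sum $I_1+I_2$ is again a graded ideal of $M$, hence a graded subalgebra, and the defining hypotheses (i), (ii) of \cref{prop:5.1} are inherited by any graded subalgebra. Inside $I_1+I_2$ the subideal $I_1$ is locally nilpotent, while $(I_1+I_2)/I_1 \cong I_2/(I_1 \cap I_2)$ is a homomorphic image of $I_2$ and hence locally nilpotent. \cref{lem:5.3} applied to $I_1+I_2$ then yields local nilpotence of the sum, and an obvious induction extends this to any finite sum of graded locally nilpotent ideals.

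Now set $\loc(M) = \sum I$, the sum taken over all graded locally nilpotent ideals $I$ of $M$; it is automatically a graded ideal. Let $N \subseteq \loc(M)$ be a finitely generated subalgebra. Each generator of $N$ is a finite sum of homogeneous components lying in $\loc(M)$, and any such homogeneous component $h$ of degree $\gamma$ is a finite sum $h = \sum_j h_{\alpha_j}$ with $h_{\alpha_j} \in I_{\alpha_j}$; taking degree-$\gamma$ components, $h$ still lies in the finite sum $I_{\alpha_1}+\dotsb+I_{\alpha_k}$. Hence $N$ is contained in a finite sum of graded locally nilpotent ideals, which is locally nilpotent by the previous paragraph, so $N$ is nilpotent. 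Thus $\loc(M)$ is graded locally nilpotent and, by construction, contains every other graded locally nilpotent ideal of $M$.

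Finally, suppose $\bar J \neq 0$ is a graded locally nilpotent ideal of $M/\loc(M)$, and let $J \trianglelefteq M$ be its preimage, a graded ideal containing $\loc(M)$. Then $J$ is a graded subalgebra of $M$ satisfying hypotheses (i), (ii), the subideal $\loc(M) \trianglelefteq J$ is locally nilpotent, and $J/\loc(M) = \bar J$ is locally nilpotent by assumption. \cref{lem:5.3} applied to $J$ forces $J$ itself to be locally nilpotent, whence $J \subseteq \loc(M)$ and $\bar J = 0$, a contradiction. Since the whole argument is essentially the standard "locally nilpotent radical" construction driven by \cref{lem:5.3}, there is no real obstacle; the only care needed is the bookkeeping to ensure gradedness is preserved at each step.
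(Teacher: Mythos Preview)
Your argument matches the paper's proof almost verbatim for the construction of $\loc(M)$ and for showing that $M/\loc(M)$ has no nonzero \emph{graded} locally nilpotent ideals. However, the statement of the lemma claims more: that $M/\loc(M)$ has no nonzero locally nilpotent ideals, graded or not. Your final paragraph only treats a graded $\bar J$, so as written the proof does not cover the full assertion.

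The paper fills this gap with one additional step. Given a nonzero (not necessarily graded) locally nilpotent ideal $J$ of $\overline{M}=M/\loc(M)$, one forms the ideal $J_{\mathrm{gr}}$ of $\overline{M}$ generated by the nonzero homogeneous components of maximal degree of elements of $J$. One checks that $J_{\mathrm{gr}}$ is again locally nilpotent (the leading components of a finite set in $J$ generate a subalgebra whose products are leading components of the corresponding products in $J$, hence eventually vanish), and $J_{\mathrm{gr}}$ is graded by construction. This contradicts the graded case you already established. Adding this short $J_{\mathrm{gr}}$ argument completes your proof.
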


\begin{remark*}
For Lie algebras this assertion was proved in \cite{13}, \cite{18}
\end{remark*}

\begin{proof}
Let $I_1, I_2$ be graded locally nilpotent ideals of $M$.  Since the factor algebra $I_1 + I_2/I_1 \cong I_2/ I_1 \cap I_2$ is locally nilpotent it follows from Lemma \ref{lem:5.3} that the algebra $I_1 + I_2$ is locally nilpotent.

Let $\loc(M)$ be the sum of all graded locally nilpotent ideals of $M$.  We showed that the ideal $\loc(M)$ is locally nilpotent.  By Lemma \ref{lem:5.3} the factor algebra $\overline{M} = M/\loc(M)$ does not contain nonzero graded locally nilpotent ideals.  Let $J$ be a nonzero (not necessarily graded) locally nilpotent ideal of $\overline{M}$.  Let $J_{\textrm{gr}}$ be the ideal of $\overline{M}$ generated by nonzero homogeneous components of elements of $J$ of maximal degree.  It is easy to see that the ideal $J_{\textrm{gr}}$ of $\overline{M}$ is locally nilpotent, a contradiction.  This completes the proof of the lemma.
\end{proof}

Recall that an algebra $A$ is called \underline{prime} if for any nonzero ideals $I,J$ of $A$ we have $IJ \neq (0)$.  A graded algebra $A = A_1 + A_2 + \dotsb$ is \underline{graded prime} if for any nonzero graded ideals $I,J$ we have $IJ \neq (0)$.  Passing to ideals $I_{\textrm{gr}}, J_{\textrm{gr}}$ we see that a graded prime algebra is prime.

The proof of the following lemma follows a well known scheme (see \cite{21}).  We still include it for the sake of completeness.

\begin{lemma}\label{lem:5.5}
Let $M = M_1 + M_2 + \dotsb$ be a graded Malcev algebra satisfying the assumptions (i), (ii) of the Proposition.  Then the ideal $\loc(M)$ is an intersection $\loc(M) = \bigcap P$ of graded ideals $P \lhd M$ such that the factor algebra $M/P$ is prime.
\end{lemma}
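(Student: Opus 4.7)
The plan is to establish both inclusions $\loc(M) \subseteq \bigcap P$ and $\bigcap P \subseteq \loc(M)$, where $P$ ranges over graded ideals of $M$ with $M/P$ prime, following the standard radical-theoretic scheme adapted to graded Malcev algebras (in the spirit of \cite{21}).

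For the inclusion $\bigcap P \subseteq \loc(M)$, I take a homogeneous $x \notin \loc(M)$ and produce a graded prime ideal $P$ avoiding $x$ via a graded m-sequence. Inductively construct nonzero homogeneous elements $x_0 = x, x_1, x_2, \ldots$, none in $\loc(M)$, with each $x_{i+1}$ a nonzero homogeneous component of $\langle x_i \rangle \ast \langle x_i \rangle$. The key point for continuing the sequence is: if every homogeneous element of $\langle x_i \rangle \ast \langle x_i \rangle$ lay in $\loc(M)$, then the image $\overline{\langle x_i \rangle}$ in $M/\loc(M)$ would square to zero, producing a nonzero abelian (hence locally nilpotent) graded ideal of $M/\loc(M)$, contradicting Lemma \ref{lem:5.4}. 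Once the m-sequence is in hand, I apply Zorn's lemma to obtain a graded ideal $P$ maximal among those disjoint from $\{x_i\}_{i \geq 0}$. The standard m-system argument (any graded ideals $I_1, I_2$ strictly containing $P$ meet the sequence at some $x_{i_1}, x_{i_2}$, and then $x_{\max(i_1,i_2)+1} \in \langle x_{i_1} \rangle \ast \langle x_{i_2} \rangle \subseteq I_1 \ast I_2$ cannot lie in $P$) shows $M/P$ is graded prime, hence prime by the remark preceding the lemma.

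For the inclusion $\loc(M) \subseteq \bigcap P$, fix $P$ with $\bar{M} := M/P$ prime. The image of $\loc(M)$ in $\bar{M}$ is a graded locally nilpotent ideal, so by Lemma \ref{lem:5.4} applied to $\bar{M}$ it is contained in $\loc(\bar{M})$; hence it suffices to show $\loc(\bar{M}) = 0$. Suppose not, and let $\bar{a} \in \loc(\bar{M})$ be a nonzero homogeneous element, with $J = \langle \bar{a} \rangle$ the graded ideal it generates in $\bar{M}$. Then $J$ is a nonzero graded locally nilpotent ideal of the graded prime algebra $\bar{M}$. Combining the hypotheses (i), (ii) with Filippov's Lemma \ref{lem:5.1} and the nilpotence of Lie homomorphic images from \cite{20} (in the spirit of the proof of Lemma \ref{lem:5.2}), I aim to show $J$ is actually nilpotent. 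Then, for the minimal $k$ with $J^k = 0$, the nonzero graded ideals $J^{k-1}$ and $J$ of $\bar{M}$ have vanishing product, contradicting graded primeness.

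The main obstacle is precisely this upgrade from local to full nilpotence of $J$ in the second direction: since $J$ is not a priori finitely generated as an algebra, Filippov's criterion does not apply directly, and one must leverage the graded prime condition to reduce to a finitely generated setting where the Malcev-specific nilpotence tools can be brought to bear. By contrast, the m-sequence/Zorn portion of the first direction is a routine adaptation of the classical ring-theoretic scheme, provided one works consistently with graded ideals and nonzero homogeneous elements throughout.
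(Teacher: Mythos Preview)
Your argument for the direction $\bigcap P \subseteq \loc(M)$ is correct but proceeds differently from the paper. The paper does not build an $m$-sequence of single elements; instead, given homogeneous $a \notin \loc(M)$, it picks a finitely generated graded subalgebra $B \subseteq I(a)$ that is not nilpotent, observes via Filippov's Lemma~\ref{lem:5.1} that $B$ is not solvable, and runs Zorn over graded ideals $P$ with $B^{(i)} \not\subseteq P$ for all $i$. The crucial point in the paper's Zorn step is that each $B^{(s)}$ is \emph{finitely generated} (Lemma~\ref{lem:5.2}), so that $B^{(s)} \subseteq \bigcup_j P_j$ forces $B^{(s)} \subseteq P_j$ for some $j$; primeness of $M/P$ then follows because $B^{(i)} \subseteq I \cap J$ implies $B^{(i+1)} \subseteq IJ$. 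Your $m$-sequence variant trades this use of Lemma~\ref{lem:5.2} for an appeal to Lemma~\ref{lem:5.4} (to keep producing $x_{i+1} \notin \loc(M)$), and the Zorn step becomes elementary since each $x_i$ is a single element. Both routes are valid; yours is closer to the ring-theoretic $m$-system scheme, the paper's is closer to the classical Kostrikin argument for Lie algebras.

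On the reverse inclusion $\loc(M) \subseteq \bigcap P$: the paper does not prove it at all, and your attempted proof has the gap you yourself identify---there is no way, with the tools at hand, to upgrade local nilpotence of the ideal $J$ to global nilpotence without finite generation. The clean fix (which renders this whole direction trivial) is to note that in \emph{your} construction $x_i \notin \loc(M)$ for every $i$, so $\loc(M)$ itself is a graded ideal disjoint from the $m$-sequence; starting the Zorn argument from $\loc(M)$ rather than from $(0)$ yields a maximal $P$ with $\loc(M) \subseteq P$ and $a = x_0 \notin P$. (The analogous observation works in the paper's setup: each $B^{(i)}$ is finitely generated and non-nilpotent, hence $B^{(i)} \not\subseteq \loc(M)$, so again one may start Zorn at $\loc(M)$.) With that adjustment, the family of $P$'s produced already contains $\loc(M)$ and the equality follows immediately; you should drop the separate argument for $\loc(\bar M)=0$.
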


\begin{proof}
Choose a homogeneous element $a \in M \setminus \loc(M)$.  Since the ideal $I(a)$ generated by the element $a$ in $M$ is not locally nilpotent there exists a finitely generated graded subalgebra $B \subseteq I(a)$ that is not nilpotent.  Since the algebra $B$ satisfies the assumptions (i), (ii) it follows from V.T. Filippov's Lemma \ref{lem:5.1} that the algebra $B$ is not solvable.

Consider the descending chain of subalgebras $B^{(0)} = B, B^{(i+1)} = (B^{(i)})^2$.  Since the algebra $B$ is not solvable we conclude that $B^{(i)} \neq (0)$ for all $i \geq 0$.

By Zorn's Lemma there exists a maximal graded ideal $P$ of $M$ with the property that $B^{(i)} \nsubseteq P$ for all $i$.  Indeed, let $P_1 \subseteq P_2 \subseteq \dotsb$ be an ascending chain of graded ideals such that $B$ is not solvable modulo each of them.  If $B$ is solvable modulo $\bigcup\limits_{i \geq 1} P_i$ then $B^{(s)} \subseteq \bigcup\limits_{i \geq 1} P_i$ for some $s \geq 1$.  By Lemma \ref{lem:5.2} the subalgebra $B^{(s)}$ is finitely generated, hence $B^{(s)} \subseteq P_i$ for some $i$, a contradiction.

We claim that the factor algebra $M/P$ is graded prime.  Indeed, suppose that $I,J$ are graded ideals of $M, P \subsetneq I, P \subsetneq J$, and $IJ \subseteq P$.  By maximality of $P$ there exists $i \geq 1$ such that $B^{(i)} \subseteq I$ and $B^{(i)} \subseteq J$.  Then $B^{(i+1)} \subseteq P$, a contradiction.  This completes the proof of the lemma.
\end{proof}

\begin{proof}[Proof of Proposition 5.1]
Let $M$ be a graded Malcev algebra satisfying the assumptions  \eqref{PropEnumi}, \eqref{PropEnumii}.  If $M$ is not nilpotent then $M \neq \loc(M)$.  By Lemma \ref{lem:5.5}, $M$ has a nonzero prime homomorphic image.  V.T. Filippov \cite{4} showed that every prime non-Lie Malcev algebra over a field of characteristic $p > 3$ is 7-dimensional over its centroid.  Now it remains to refer to the result of E.L. Stitzinger \cite{19} on Engel's Theorem in the form of Jacobson for Malcev algebras.  This completes the proof of the Proposition.
\end{proof}

\section{Proof of Theorem \ref{thm:1}}\label{Section6}


Let $U(m,p^n)$ be the free Moufang loop of exponent $p^n$ on $m$ free generators $x_1, \dotsc, x_m$.  Let $E = E(U(m,p^n))$ be the minimal group with triality that corresponds to the loop $U(m,p^n)$ (see \cite{9}).  The group $E$ is generated by elements $x_1, \dotsc, x_m, x_1^\rho, \dotsc, x_m^\rho$.  Consider the Zassenhaus descending chain of subgroups $E = E_1 > E_2 > \dotsb.$  Let 
\[
G = E/ \bigcap\limits_{i \geq 1} E_i, \;\;\; U = [G, \sigma].
\]

Theorem 4 from \cite{5} implies that an arbitrary finite $m$-generated Moufang loop of exponent $p^n$ is a homomorphic image of the loop $U$.  We will show that the loop $U$ is finite.

As above, consider the Lie $p$-algebra $L = L_p(G) = \bigoplus\limits_{i \geq 1} L_i$, $L_i=G_i/ G_{i+1}$, over the field $\Fp, \abs{\Fp} = p$, and the Malcev algebra \linebreak$H = \{a - a^{\sigma}| a \in L\}$.  The Malcev algebra $H$ is graded, $H = \bigoplus\limits_{i \geq 1} H_i, H_i = H \cap L_i$, and satisfies the assumptions \eqref{PropEnumi}, \eqref{PropEnumii} of the Proposition.


Consider the Lie subalgebra $L'$ of $L$ generated by the set $I_m = \{a_1, \dotsc, a_m, a_1^\alpha, \dotsc, a_m^\alpha\}$, where $a_i = x_i E_2 \in L_1, 1 \leq i \leq m$. The whole Lie algebra $L$ is generated by $I_m$ as a $p$-algebra.

Since the subalgebra $L'$ is $S_3$-invariant it follows that $L'$ is a Lie algebra with triality.  Therefore $L'$ gives rise to the Malcev algebra $H' = L' \cap H$.  By Lemma \ref{lem:3.4} the elements $a_1, \dotsc, a_m$ generate $H'$ as a Malcev algebra.  Hence, by Proposition \ref{prop:5.1} the algebra $H'$ is nilpotent and finite dimensional.  Let $\dim_{\Fp}H' = d$.

Since the Lie algebra $L$ is generated by $a_1, \dotsc, a_m$ as a $p$-algebra it follows that $L$ is spanned by $p$-powers $c^{[p^k]},$ where $c$ is a commutator in $a_1, \dotsc, a_m$ of length $\leq 2d, k \geq 0$.  The space $H$ is spanned by $p$-th powers $c^{[p^k]}$, where the commutators $c$ have odd length.

An arbitrary homogeneous element $a \in H_i$ can be represented as $[g,\sigma] G_{i+1}$, where $g \in G_i$.  Hence $[g,\sigma]^{p^n} = 1$ implies $a^{[p^n]} = 0$.  
Then $H$ is spanned by $p$-powers $c^{[p^k]}$, where $c$ is a commutator in $a_1, \dotsc, a_m$ of odd length $\leq 2d$ and $k < n$.  Hence, $\dim_{\Fp} H < \infty$.  Since $\abs{H} = \abs{U}$ we conclude that $\abs{U} < \infty$.  This concludes the proof of Theorem 1.



\section*{Acknowledgements}
The first author was supported by CNPq (grant 307824/2016-0), by FAPESP (Brazil) and Russian Foundation for Basic Research (under grant 16-01-00577a).  The second author was supported in part by SNI-CONACyT, PRODEP-UAEM "Estancias cortas de investigacion de integrantes de Cuerpos Academicos Consolidados-2020",
FAPESP 2019/24418-0 and UC MEXUS grants.  The third author gratefully acknowledges the support of the NSF grant DMS 1601920.


\endnotetext[1]{Department of Mathematics, University of S\~ao Paulo, S\~ao Paulo, Brazil, \\
and Omsk F.M. Dostoevsky State University, Omsk, Russia,\\
E-mail address: grishkov@ime.usp.br;}

\endnotetext[2]{Department of Mathematics, Autonomous University of the State of Morelos, Cuernavaca, Mexico,\\
E-mail address: liudmila@uaem.mx;}

\endnotetext[3]{Department of Mathematics, University of California, San Diego, USA\\
E-mail address: ezelmanov@math.ucsd.edu}

\theendnotes

\end{document}